\documentclass{amsart}
\usepackage[latin1]{inputenc}
\usepackage[english]{babel}
\usepackage{indentfirst}
\usepackage{amssymb}
\usepackage{amsthm}
\usepackage{xcolor}
\usepackage[all]{xy}


\DeclareMathAlphabet{\mathpzc}{OT1}{pzc}{m}{it}
\DeclareMathAlphabet{\mathcal}{OMS}{cmsy}{m}{n}

\newtheorem{THEO}{Theorem}[section]

\newtheorem{CORO}[THEO]{Corollary}
\newtheorem{LEMM}[THEO]{Lemma}

\newtheorem{DEFI}[THEO]{Definition}

\def\({\left(}
\def\){\right)}
\def\diam{\text{$\operatorname{diam}$}}
\def\N{\mathbb{ N}}
\def\R{\mathbb{ R}}

\def\rto{\longrightarrow}

\def\CC{\mathcal{C}}

\def\1{\textbf{1}}

\def\w{\omega}

\author{Gonzalo Mart\'{i}nez-Cervantes}
\address{Departamento de Matem\'{a}ticas\\ Facultad de Matem\'{a}ticas\\ Universidad de Murcia\\ 30100 Espinardo (Murcia)\\ Spain} 
\email{gonzalo.martinez2@um.es}
\title{On weakly Radon-Nikod\'{y}m compact spaces}

\subjclass[2010]{46B22, 46B50, 54G20}

\keywords{Radon-Nikod\'{y}m compact, weakly Radon-Nikod\'{y}m compact, quasi weakly Radon-Nikod\'{y}m}

\thanks{This work was supported by the research project 19275/PI/14 funded by Fundaci\'{o}n S\'{e}neca - Agencia de Ciencia y Tecnolog\'{i}a de la Regi\'{o}n de Murcia within the framework of PCTIRM 2011-2014.  This work was also supported by Ministerio de Econom\'{i}a y Competitividad and FEDER (project MTM2014-54182-P)}

\begin{document}

\begin{abstract}

A compact space is said to be weakly Radon-Nikod\'{y}m if it is homeomorphic to a weak$^\ast$-compact subset of the dual of a Banach space not containing an isomorphic copy of $\ell_1$.
In this paper we provide an example of a continuous image of a Radon-Nikod\'{y}m compact space which is not weakly Radon-Nikod\'{y}m. Moreover, we define a superclass of the continuous images of weakly Radon-Nikod\'{y}m compact spaces and study its relation with Corson compacta and weakly Radon-Nikod\'{y}m compacta.

\end{abstract}

\maketitle

In \cite{MTK:7010772}, I. Namioka defined a compact space $K$ to be \textbf{Radon-Nikod\'{y}m} (RN for short) if and only if it is homeomorphic to a weak$^\ast$-compact subset of a dual Banach space with the Radon-Nikod\'{y}m property. One of the most important questions posed by Namioka in this paper was whether the class of RN compact spaces is closed under continuous images. This question was solved negatively by A. Avil\'{e}s and P. Koszmider in \cite{Aviles2013}.

The class of weakly Radon-Nikod\'{y}m compact spaces generalizes the class of RN compact spaces.
In \cite{GlasnerMegrelishvili2012}, E. Glasner and M. Megrelishvili define a compact space to be \textbf{weakly Radon-Nikod\'{y}m} (WRN for short) if and only if it is homeomorphic to a weak$^\ast$-compact subset of the dual of a Banach space not containing an isomorphic copy of $\ell_1$.
In \cite{2014arXiv1405.2588G}, they ask whether the class of WRN compact spaces is stable under continuous images.
In these papers, a picture of this class of compact spaces is given and it is characterized in terms of Banach spaces of continuous functions. They prove that linearly ordered compact spaces are WRN. Moreover, 
\cite{2014arXiv1405.2588G} contains a proof by S. Todorcevic that $\beta \N$ is not WRN.

In this work we answer negatively the question of E. Glasner and M. Megrelishvili by proving that a modification of the construction given in \cite{Aviles2013} provides an example of a continuous image of a RN compact space which is not WRN.
We construct a RN compact space $\mathbb{L}_0$, a non-WRN compact space $\mathbb{L}_1$ and a surjective continuous function $\pi: \mathbb{L}_0 \rto \mathbb{L}_1$ in the same way as in \cite{Aviles2013}.

In the second section we define quasi WRN compact spaces, a superclass of the continuous images of WRN compact spaces. We prove that zero-dimensional quasi WRN compact spaces are WRN. We also show other relations of quasi WRN compacta with Corson and Eberlein compacta, including an example of a Corson compact space which is not quasi WRN.

\section{A continuous image of a RN compact space which is not WRN}

\subsection{Construction of $\mathbb{L}_0$, $\mathbb{L}_1$ and $\pi : \mathbb{L}_0 \rto \mathbb{L}_1$ }

Any unexplained notation can be found in \cite{Aviles2013}. By $\Delta = 2^\N= \lbrace 0,1 \rbrace^\N $ we denote the Cantor set with the topology induced by the metric $\rho : \Delta \times \Delta \rto \R$ given by $\rho (x,y)= 2 ^{-\min \lbrace k: x_k \neq y_k \rbrace }$ if $x \neq y$. 
$T=2^{<\w}$ denotes the set of all finite sequences of 0's and 1's and, for every $t\in T$, $|t|$ denotes the length of $t$. By $G$ we denote all finite sets $g=\lbrace s_1, ..., s_n \rbrace $ such that 
$s_1,...,s_n \in T$ verify $|s_1|=...=|s_n|$.
Given $t\in T$ and $g \in G$ we define $\Gamma_{g}^t: \Delta \rto \Delta$ as:
\begin{itemize}
\item $\Gamma_g^t ( s ^\frown \lambda) = t ^\frown \lambda $ if $s \in g$ and $\lambda \in \Delta$,
\item $\Gamma_g^t ( z )=t^\frown (0,0,...)$ in the rest of points.
\end{itemize}
Notice that each $\Gamma_g^t$ is continuous.

The main difference between our construction and that of \cite{Aviles2013} is the choice of the functions $\Gamma_g^t$ with $g \in G$ and $t \in T$.
This choice will let us prove that $\mathbb{L}_1$ is not WRN.

Let $K=\bigcup_{t\in T} A_t \cup B \cup C$ be a compact space such that:
\begin{enumerate}
\item[(1)] All points of $A=\bigcup_{t\in T} A_t$ are isolated in $K$.
\item[(2)] For every $x \in B$ there exists an infinite set $C_x \subset A$ such that $\overline{C_x}=C_x \cup \lbrace x \rbrace$ and moreover, $\overline{ C_x}$ is open in $K$.
\item[(3)] There exists a function $\psi : B \rto G^T$ such that for any family  of subsets of $A$ of the form $\lbrace X_g^t: g \in G, t \in T \rbrace$ with $A_t = \bigcup_{g\in G} X_g^t$ for every $t \in T$, there exists $x \in B$ such that  $C_x \cap X^t_{\psi(x)[t]}$ is infinite for all $t\in T$.

\end{enumerate}

A. Avil\'{e}s and P. Koszmider called a compact space of the previous form a basic space and they provided some examples of such compact spaces.

Consider $L=(A \times \Delta ) \cup B \cup C$, $\mathbb{L}_0$, $\mathbb{L}_1$ and $\pi: \mathbb{L}_0 \rto \mathbb{L}_1$ defined in the same way as in \cite{Aviles2013}:

\begin{itemize}
\item A basic neighborhood of a point $(a,t)$ in $L$ is of the form $\lbrace a \rbrace \times U $ where $U$ is a neighborhood of $t$ in $\Delta$. A basic neighborhood of a point $x \in B \cup C$ is of the form $((U \cap A) \times \Delta) \cup (U \setminus A )$, where $U$ is a neighborhood of $x$ in $K$.

\item $q: \Delta \rto [0,1]$ is the continuous surjective function given by the formula $q(t_1, t_2,...)= \sum_{k \in \N} \frac{t_k}{2^k} $.

\item For every $x \in B$, $g_x: L \setminus \lbrace x \rbrace \rto \Delta $ is the continuous function given by the formula $g_x(a,z)=\Gamma_{\psi(x)[t]}^t (z)$ for $a\in A_t \cap C_x$, $z\in \Delta$ and $g_x(y)=0$ in any other case.

\item For every $x \in B$, $f_x: L\setminus \lbrace x \rbrace \rto [0,1] $ is the continuous function $f_x = q \circ g_x $.

\item $\mathbb{L}_0 = \lbrace [u,v] \in L \times \Delta^B: g_x(u)=v_x \mbox{ for all } x\in B\setminus \lbrace u \rbrace \rbrace$.

\item $\mathbb{L}_1 = \lbrace [u,v] \in L \times [0,1]^B: f_x(u)=v_x \mbox{ for all } x\in B\setminus \lbrace u \rbrace \rbrace$.

\item $\pi [u,v] = [u , q\(v_x\)_{x \in B} ]$.
\end{itemize}

It is clear that $\pi$ is continuous and surjective.
$\mathbb{L}_0$ is RN, since the proof given in \cite{Aviles2013} also works in this case because it does not use the definition of the functions $\Gamma_s^t$, just its continuity and the fact that the image of $\Gamma_s^t$ is $\lbrace t^\frown \lambda: \lambda \in \Delta \rbrace$.

\subsection{$\mathbb{L}_1$ is not WRN}

For any compact space $X$, $\CC(X)$ is the Banach space 	of real valued continuous functions on $X$.

\begin{DEFI}
Let $X$ be a compact space. A family of functions $F \subset \CC(X)$ is said to be \emph{fragmented} if for every nonempty subset $A$ of $X$ and every $\varepsilon >0$ there exists an open subset $O$ in $X$ such that $O\cap A$ is nonempty and $f(O\cap A)$ has diameter smaller than $\varepsilon$ for every $f \in F$.
$F$ is said to be \emph{eventually fragmented} if every sequence in $F$ has a subsequence which is a fragmented family on $X$.
\end{DEFI}

We will use the following characterization of WRN compact spaces:

\begin{THEO}[{\cite[Theorem 6.5]{GlasnerMegrelishvili2012}}]
\label{GlasnerMegrelishvili}

Let $X$ be a compact space. Then, $X$ is WRN if and only if there exists an eventually fragmented uniformly bounded family of continuous functions $F \subset \CC(X)$  which separates the points of $X$. 
\end{THEO}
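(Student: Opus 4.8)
The plan is to prove both implications with a single engine: Rosenthal's $\ell_1$ theorem, used in its two equivalent forms. First, a Banach space $E$ contains no isomorphic copy of $\ell_1$ if and only if every bounded sequence in $E$ has a weak-Cauchy subsequence. Second, in its function-space (Rosenthal--Bourgain--Fremlin--Talagrand) form, a uniformly bounded sequence $(f_n)$ in $\CC(X)$ has a pointwise convergent subsequence if and only if it has no \emph{independent} subsequence, i.e. no subsequence $(f_{n_k})$ for which there are reals $a<b$ with
$$\bigcap_{k\in P} f_{n_k}^{-1}(-\infty,a)\;\cap\;\bigcap_{k\in Q} f_{n_k}^{-1}(b,+\infty)\neq\varnothing \quad \text{for all finite disjoint } P,Q\subset\N.$$
The conceptual bridge I would establish before anything else is a lemma linking fragmentation to independence: \emph{a uniformly bounded sequence in $\CC(X)$ is eventually fragmented if and only if it has no independent subsequence}. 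The forward direction follows because a single continuous function is a fragmented family while an independent family never is (testing fragmentation against $A=X$ with $\varepsilon=(b-a)/2$ confines each function to one side of the gap and, iterating a fragmentation derivative, contradicts the full branching given by independence). The reverse direction follows by extracting, via the Rosenthal--BFT dichotomy, a pointwise convergent subsequence and verifying that it is a fragmented family.

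For the implication WRN $\Rightarrow$ existence of the family, I would assume $X$ is homeomorphic to a weak$^\ast$-compact set $K\subset E^\ast$ with $\ell_1\not\hookrightarrow E$, and take $F=\{\hat e|_K : e\in B_E\}$, where $\hat e(x^\ast)=x^\ast(e)$. These functions are continuous; they separate the points of $K$ because $E$ separates the points of $E^\ast$; and they are uniformly bounded since $K$ is weak$^\ast$-compact, hence norm bounded. To see that $F$ is eventually fragmented, I take any sequence $(\hat e_n)$ with $e_n\in B_E$; since $\ell_1\not\hookrightarrow E$, Rosenthal's theorem yields a weak-Cauchy subsequence $(e_{n_k})$, so $(\hat e_{n_k})$ converges pointwise on $K$, has no independent subsequence, and is therefore eventually fragmented by the bridging lemma.

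For the converse I would let $E=\SPAN(F)\subset\CC(X)$ be the closed linear span and embed $X$ into $E^\ast$ by the evaluation map $j(x)(g)=g(x)$. Since $F$ separates points, $j$ is injective; it is weak$^\ast$-continuous and $X$ is compact, so $j$ is a homeomorphism onto the weak$^\ast$-compact set $j(X)\subset B_{E^\ast}$. It then remains to prove $\ell_1\not\hookrightarrow E$, for which, by Rosenthal's theorem, it suffices to show that every bounded sequence in $E$ has a weak-Cauchy subsequence; equivalently, by the dichotomy and the bridging lemma, that no bounded sequence in $E$ has an independent subsequence.

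The main obstacle is precisely this last reduction: the hypothesis controls only the generating family $F$, whereas I must control every bounded sequence of the generated space $E=\SPAN(F)$, whose terms are norm-bounded linear combinations, and norm limits thereof, of elements of $F$. Thus the heart of the argument is a permanence result transferring the property ``no independent subsequence'' (equivalently, eventual fragmentability) from $F$ to bounded subsets of its closed linear span: concretely, that $\ell_1\hookrightarrow\SPAN(F)$ would force an independent sequence already controllable by the eventual fragmentation of $F$, contradicting the hypothesis. I expect the interaction between independence and linear combinations to be the genuinely nontrivial combinatorial input — stability under norm limits is comparatively soft, since a uniform $\varepsilon$-approximation shifts the witnessing levels $a<b$ by at most $\varepsilon$, but stability under linear combinations is where Rosenthal's $\ell_1$ theorem and the structure theory of the first Baire class must be invoked in earnest. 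Granting the permanence result, every bounded sequence in $E$ has no independent subsequence, hence a weak-Cauchy subsequence, so $\ell_1\not\hookrightarrow E$ and $X$ is WRN.
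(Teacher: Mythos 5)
Note first that the paper does not actually prove this statement: it is quoted from \cite[Theorem 6.5]{GlasnerMegrelishvili2012}, and the paper only records that the proof rests on the Davis--Figiel--Johnson--Pe{\l}czy\'{n}ski construction \cite{DAVIS1974311} together with Rosenthal's $\ell_1$-theorem \cite{zbMATH03465962}. Your forward implication (WRN $\Rightarrow$ existence of the family) is correct and is the standard argument, and your bridging lemma is exactly \cite[Fact 4.3 and Proposition 4.6]{GlasnerMegrelishvili2012}, the same facts the paper invokes before Theorem \ref{WRNindependent}. The genuine gap is in the converse, and it is precisely the step you flagged and then ``granted'': the permanence result transferring eventual fragmentation (equivalently, absence of independent subsequences) from $F$ to bounded subsets of $E=\SPAN(F)$ is \emph{false}, so the reduction to showing $\ell_1\not\hookrightarrow\SPAN(F)$ cannot be completed by any combinatorial refinement. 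Concretely, take $X=[0,1]$ and $F=\lbrace t\mapsto t^n : n\geq 0\rbrace$. This family is uniformly bounded and separates points, and it is eventually fragmented: every subsequence converges pointwise (to the indicator of $\lbrace 1\rbrace$), hence is weakly Cauchy in $\CC([0,1])$ and has no subsequence equivalent to the $\ell_1$-basis, so eventual fragmentation follows from the cited equivalence. Yet by Stone--Weierstrass $\SPAN(F)=\CC([0,1])$, which contains isomorphic copies of $\ell_1$ (indeed of every separable Banach space). So with your choice of $E$ the conclusion $\ell_1\not\hookrightarrow E$ simply fails, even though $[0,1]$ is of course WRN via a different dual embedding.

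The correct route --- the one the paper points to immediately after the statement --- replaces the closed linear span by the DFJP interpolation space. One applies the construction of \cite{DAVIS1974311} to the closed absolutely convex hull $W$ of $F$ in $\CC(X)$, obtaining a Banach space $V$ whose unit ball is essentially $\bigcap_{n\in\N}\(2^nW+2^{-n}B_{\CC(X)}\)$, together with a bounded injection $j:V\rto\CC(X)$; then $x\mapsto j^{\ast}(\delta_x)$ embeds $X$ homeomorphically into $\(V^{\ast},w^{\ast}\)$ by the injectivity-plus-compactness argument you already gave. The payoff of the change of norm is that bounded sequences in $V$ are, up to arbitrarily small norm perturbations (which, as you correctly note, only shift the witnessing levels $a<b$ slightly), multiples of elements of $W$. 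Hence the property ``no independent subsequence'' needs to be transferred only from $F$ to its \emph{convex hull} --- a true, though nontrivial, stability theorem for eventually fragmented (tame) families --- rather than to the whole linear span, where it is false. Your instinct that the linear-combination step is where the real work lies was sound; the resolution is not a permanence theorem for spans but the DFJP renorming that keeps the unit ball pinned to the convex hull of $F$.
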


The proof of the previous theorem is based on the construction of Davis-Figiel-Johnson-Pelczy\'{n}ski \cite{DAVIS1974311} and Rosenthal's $\ell_1$-theorem \cite{zbMATH03465962}.
\begin{THEO}
$\mathbb{L}_1$ is not WRN.
\end{THEO}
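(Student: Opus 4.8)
The plan is to prove that $\mathbb{L}_1$ is not WRN by contradiction, using the characterization in Theorem \ref{GlasnerMegrelishvili}: I would assume there exists an eventually fragmented, uniformly bounded family $F \subset \CC(\mathbb{L}_1)$ that separates points, and derive a contradiction by exploiting property (3) of the basic space $K$. The key structural observation is that the points $[u,v]$ of $\mathbb{L}_1$ corresponding to $u \in A \times \Delta$ are (up to the topology) indexed over $A \times \Delta$, and the coordinate functions $f_x = q \circ g_x$ encode, via the maps $\Gamma_{\psi(x)[t]}^t$, a rigid combinatorial pattern that property (3) will force a witnessing point $x \in B$ to detect no matter how we try to fragment.

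First I would set up the reduction. Since $F$ separates the points of $\mathbb{L}_1$ and in particular separates the points of the fiber over each isolated $a \in A_t$ (which, as $a$ ranges with the $\Delta$-coordinate, gives a copy of $\Delta$), I expect to use the separation together with continuity to extract, for each $f \in F$, enough information to define a partition of each $A_t$ of the form $A_t = \bigcup_{g \in G} X_g^t$. The natural way to build such a partition is to fix a sequence $(f_n)_{n}$ in $F$, pass (by eventual fragmentation) to a fragmented subsequence, and then use fragmentation over the subset $C := \{[u,v] : u \in B\}$ or over suitable countable subsets of $A \times \Delta$ to quantize the values of the $f_n$ into finitely many $G$-indexed cells at each level $|t|$. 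The goal of this bookkeeping is to manufacture a family $\{X_g^t : g \in G,\ t \in T\}$ to which hypothesis (3) applies.

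Next, having produced such a partition, property (3) yields a point $x \in B$ with $C_x \cap X^t_{\psi(x)[t]}$ infinite for every $t \in T$. I would then look at the restriction of the coordinate function $f_x$ (and hence of the relevant members of $F$, after the translation between $F$ and the coordinate structure of $\mathbb{L}_1$) to the set $\overline{C_x}$, which by (2) is open with a single non-isolated point $x$. The definition $g_x(a,z) = \Gamma_{\psi(x)[t]}^t(z)$ on $A_t \cap C_x$, composed with $q$, should force the values of $f_x$ along $C_x \cap X^t_{\psi(x)[t]}$ to fill out a set that cannot be fragmented near $x$: no small open neighborhood can pin the oscillation below $\varepsilon$, because infinitely many witnessing points at arbitrarily deep levels $t$ each contribute a full $q$-image of $\{t^\frown \lambda : \lambda \in \Delta\}$, i.e. a full subinterval of values. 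This is the step where the specific choice of the $\Gamma_g^t$ (the point stressed in the construction) does the work and differs from \cite{Aviles2013}.

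The main obstacle I anticipate is the translation between the abstract separating family $F$ on $\mathbb{L}_1$ and the concrete coordinate functions $f_x$: a priori $F$ need not contain the $f_x$, so I must show that any point-separating fragmented family is rich enough to recover the partition in step two, and that fragmentation of $F$ propagates to a fragmentation obstruction expressed through the $f_x$. Concretely, the delicate point is to choose the subset $A$ of $\mathbb{L}_1$ in the definition of fragmentation cleverly — presumably a subset accumulating at the witnessing $x$ — so that the infinite intersection $C_x \cap X^t_{\psi(x)[t]}$ guaranteed by (3) produces, inside every open set meeting it, two points whose $F$-values differ by at least a fixed $\varepsilon_0 > 0$, contradicting fragmentedness of the subsequence. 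I would make this quantitative by tracking how $q \circ \Gamma_{\psi(x)[t]}^t$ spreads the $\Delta$-coordinate across a full dyadic interval of length bounded below independently of $t$.
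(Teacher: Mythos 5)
Your skeleton (argue by contradiction via Theorem \ref{GlasnerMegrelishvili}, manufacture a partition $A_t=\bigcup_{g\in G}X_g^t$ to feed into property (3), and then exploit the maps $\Gamma^t_{\psi(x)[t]}$ at the witnessing point $x\in B$) matches the paper, but two essential steps are missing or pointed in the wrong direction. First, the partition. You propose to ``quantize the values of the $f_n$'' over $C$ or over countable subsets of $A\times\Delta$, but an element of $G$ is a finite set of \emph{equal-length nodes of the dyadic tree} $T$, not a cell of function values; the actual mechanism is to assign to each $a\in A_t$ and each $f\in F$ a node $s_f(a)\in T$ of \emph{minimal length} such that $\diam f\bigl(a+[s_f(a)^\frown(0,0,\dots),s_f(a)^\frown(1,1,\dots)]\bigr)<4^{-|t|}$, and then to use eventual fragmentation (applied to the copy of $\Delta$ sitting above $a$, together with minimality of $|s_f(a)|$) to show that $\{s_f(a):f\in F\}$ is finite, which is what produces $g_a\in G$ and hence $X_g^t=\{a\in A_t: g_a=g\}$. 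Without this ``fragmentation depth'' bookkeeping — and in particular without the level-dependent threshold $4^{-|t|}$ — the later summation has nothing to work with.

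Second, and more seriously, your intended contradiction is with fragmentedness: you want the values of $f_x$ along $C_x\cap X^t_{\psi(x)[t]}$ to ``fill out a set that cannot be fragmented near $x$,'' producing two points in every open set whose $F$-values differ by a fixed $\varepsilon_0>0$. This cannot work as stated: the interval of values contributed at level $t$ is $[q(t^\frown(0,0,\dots)),q(t^\frown(1,1,\dots))]$, whose length is $2^{-|t|}\to 0$, so deep levels contribute no uniform oscillation; moreover $F$ is \emph{assumed} eventually fragmented, so the contradiction must land on the separation hypothesis instead. The correct endgame is the opposite of what you describe: one shows that every $f\in F$ is \emph{constant} on the whole fiber $x\oplus[0,1]$, hence fails to separate the two concrete points $x\oplus 0$ and $x\oplus 1$ (which your outline never identifies). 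This is done by passing, for fixed $f$ and $t$, to a subsequence $a_{n_k}$ of $C_x\cap X^t_{\psi(x)[t]}$ sharing a single witness $s\in\psi(x)[t]$, noting that $a_{n_k}+s^\frown\lambda\to x\oplus q(t^\frown\lambda)$, so that $\diam f\bigl(x\oplus[t^0,t^1]\bigr)\le 4^{-|t|}$, and then covering $[0,1]$ by the $2^m$ intervals $[t^0,t^1]$ with $|t|=m$ to get $\diam f\bigl(x\oplus[0,1]\bigr)\le 2^m4^{-m}\to 0$. The choice of $4^{-|t|}$ (rather than a fixed $\varepsilon$) is exactly what makes this chain of $2^m$ small oscillations sum to something that vanishes, and it is absent from your plan.
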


\begin{proof}

Suppose that $F \subset \CC( \mathbb{L}_1)$ is an eventually fragmented uniformly bounded family. We will find two points that are not separated by $F$.
For every $a \in A$ and every $z_1 \leq  z_2 \in \Delta$, we denote by $a+z_1$ the unique point of $\mathbb{L}_1$ of the form $[(a,z_1), v]$ and by $a+[z_1,z_2]$ the set of points $a+z$ with $z\in [z_1,z_2]$, where the order in $\Delta$ is the lexicographical order. Similarly, for every $x \in B$ and every $\xi_1 \leq \xi_2 \in [0,1]$ we denote by $x \oplus \xi_1 = [x, v] \in \mathbb{L}_1$ the point given by the formula $v_y= f_y (x)$ for every $y \in B \setminus \lbrace x \rbrace $ and $v_x= \xi_1$ and by $x \oplus [ \xi_1, \xi_2] $ we denote the set of points $x \oplus \xi$ with $\xi \in [\xi_1, \xi_2]$.
For every $a \in A_t$ and $f \in F$, we can find $s_f (a) \in T$ such that 
$$ \diam \( f(a+[s_f(a)^\frown (0,0,...) , s_f(a) ^\frown (1,1,...) ] ) \) < \frac{1}{4^{|t|}} .$$
For each $a \in A$ and $f \in F$ fix $s_f (a)$ with the previous property and $|s_f(a)|$ minimum.

Then, the set $\lbrace s_f(a): f \in F \rbrace $ is finite for every $a \in A$.
Namely, if this set were not finite, there would exist $a\in A_t$ for some $t\in T$ and a sequence $\lbrace f_n \rbrace_{n \in \N}$ in $F$ such that $|s_{f_n}(a)| \rto \infty $, so $\lbrace f_n : n \in \N \rbrace $ would not have a fragmented subsequence, since for any open subset $O$ of $a+\Delta$ there would exist $N$ such that $f_n(O)$ has diameter bigger than $\frac{1}{4^{|t|}} $ for every $n > N$.
Therefore, $\lbrace s_f(a): f \in F \rbrace $ is finite for every $a \in A$ due to the eventually fragmentability of $F$. Thus, for every $a \in A_t$ there exists $g_a \in G$ such that for every $f \in F$ we can find $s \in g_a$ with $$ \diam \( f(a+[s^\frown (0,0,...) , s^\frown (1,1,...) ] ) \) < \frac{1}{4^{|t|}} .$$ 
Let $X^t_g= \lbrace a \in A_t: g_a = g \rbrace$ for every $t \in T$ and every $g \in G$.
These sets verify $A_t = \bigcup _{g \in G} X^t_g$ for every $t \in T$. Due to property (3), there exists $x \in B$ such that $C_x \cap X^t_{\psi(x)[t] }$ is infinite for every $t \in T$. 

We are going to prove that $F$ does not separate the points of $\mathbb{L}_1$ by showing that $f(x \oplus 0 )= f( x \oplus 1)$ for every $f \in F$.
Fix $f \in F$ and an infinite subset $\lbrace a_n : n \in \N \rbrace \subset C_x \cap X^t_{\psi(x)[t] } $.
Since $g_{a_n}= \psi (x)[t] \in G$ for every $n \in \N$ and $\psi(x) [t]$ is finite, there exist a subsequence $\lbrace a_{n_k} \rbrace_{k \in \N}$ and $s\in \psi(x)[t]$ such that $$ \diam \( f(a_{n_k}+[s^\frown (0,0,...) , s^\frown (1,1,...) ] ) \) < \frac{1}{4^{|t|}} \mbox{ for every } k \in \N .$$

Notice that
$$ f_x(a_{n_k}+s^\frown (i,i,...))= q(\Gamma_{\psi(x)[t]}^t (s ^\frown (i,i,...))=q(t ^\frown (i,i,...))=:t^i $$
for every $i \in \lbrace 0,1 \rbrace $. Taking limits we obtain $a_{n_k}+s^\frown (i,i,...) \rto x \oplus \xi^i$ for every $i \in \lbrace 0,1 \rbrace $, where 
$$ \xi^i = \lim_n f_x( a_{n_k} + s^\frown (i,i,...))= t^i .$$  

For every $ \xi_0, \xi_1 \in [t^0, t^1]$, there exist $\lambda_1, \lambda_2 \in \Delta$ such that $q( t ^\frown \lambda_i )=\xi_i$ and therefore $a_{n_k}+ s ^\frown \lambda_i  \rto x \oplus \xi_i $, so $$d( f(x \oplus \xi_0), f(x \oplus \xi_1) )= \lim_n d( f(a_{n_k}+ s ^\frown \lambda_0) , f(a_{n_k}+ s ^\frown \lambda_1)) \leq \frac{1}{4^{|t|}} .$$
Thus, 
$$ \diam \( f( x \oplus [ t^0, t^1] ) \) \leq \frac{1}{4^{|t|}} .$$

Now, since for every $m \in \N$ 
$$ \lbrace [t^0, t^1]: t \in T, |t|=m  \rbrace = \lbrace [(k-1)2^{-m}, k 2^{-m}]: k=1,2,..., 2^m \rbrace ,$$ 
it follows that $\diam f( x \oplus [ 0, 1] ) \leq 2^m \frac{1}{4^{m}}= \frac{1}{2^m}$ for every $m \in \N$.
Therefore, $f(x \oplus 0 ) = f( x \oplus 1) $ and $F$ does not separate $x \oplus 0$ and $x \oplus 1$.
\end{proof}

\section{WRN and QWRN compact spaces}

\begin{DEFI}
A sequence $\(A_n ^0, A_n ^1\)_{n \in \N}$ of disjoint pairs of subsets of a set $S$ is said to be \textbf{independent} if for every $n\in \N$ and every $\varepsilon : \lbrace 1,2,...,n \rbrace \rto \lbrace 0,1 \rbrace $ we have
$\bigcap_{k=1}^n A_k^{\varepsilon(k)} \neq \emptyset$.

A sequence of functions $\(f_n\)_{n \in \N} \subset \R ^S$ is said to be \textbf{independent} if there exist real numbers $p<q$ such that the sequence $\(A_n ^0, A_n ^1\)_{n \in \N}$ is independent, where $A_n ^0= \lbrace s \in S: f_n (s)<p \rbrace $ and $A_n ^1= \lbrace s \in S: f_n (s)>q \rbrace $ for every $n\in \N$.

\end{DEFI}

Every compact space $K$ is homeomorphic to a subspace of a product space $[0,1]^\Gamma$. If $i: K \rto [0,1]^\Gamma$ is an embedding, then $x_\alpha$ denotes $\alpha (i(x))$ for every $x \in K$ and every $\alpha \in \Gamma$.

Using Rosenthal's $\ell_1$-theorem and the fact that a uniformly bounded family of continuous functions is eventually fragmented if and only if it does not contain a sequence equivalent to the unit basis of $\ell_1$ (see \cite[Fact 4.3 and Proposition 4.6]{GlasnerMegrelishvili2012}), Theorem \ref{GlasnerMegrelishvili} can be reformulated in the following way:

\begin{THEO}
\label{WRNindependent}
A compact space $K$ is WRN if and only if there exists an homeomorphic embedding of $K$ in $ [0,1]^\Gamma$ such that for every $p<q$, the family of disjoint pairs of subsets $\(A_\alpha^0, A_\alpha^ 1\)_{\alpha \in \Gamma}$ does not contain independent sequences, where $A_\alpha^0 = \lbrace x \in K: x_\alpha < p \rbrace$ and $A_\alpha^1 = \lbrace x \in K: x_\alpha > q \rbrace$ for every $\alpha \in \Gamma$.
\end{THEO}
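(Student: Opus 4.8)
The plan is to derive Theorem \ref{WRNindependent} from Theorem \ref{GlasnerMegrelishvili} by translating the notions ``separating family'' and ``eventually fragmented'' into the language of embeddings and independent sequences. The bridge is the standard dictionary between homeomorphic embeddings $i\colon K\rto[0,1]^\Gamma$ and uniformly bounded families $F\subset\CC(K)$ separating the points of $K$: from an embedding one reads off the coordinate functions $e_\alpha(x)=x_\alpha$, which are uniformly bounded by $1$ and separate points because $i$ is injective and the projections separate points of $[0,1]^\Gamma$; conversely, from a uniformly bounded separating family $F$ one obtains, after an affine rescaling of each function into $[0,1]$, the evaluation map $x\mapsto(f(x))_{f\in F}$, which is a continuous injection of the compact space $K$ into $[0,1]^F$ and hence a homeomorphic embedding. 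Under this dictionary the pairs $A_\alpha^0=\{x:x_\alpha<p\}$ and $A_\alpha^1=\{x:x_\alpha>q\}$ of the statement are exactly the pairs associated with $e_\alpha$ in the definition of an independent sequence of functions, so ``for every $p<q$ the family $(A_\alpha^0,A_\alpha^1)_{\alpha\in\Gamma}$ contains no independent sequence'' means precisely that $F=\{e_\alpha\}$ contains no independent sequence of functions.

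Granting this dictionary, it remains to prove that a uniformly bounded separating family $F$ is eventually fragmented if and only if it contains no independent sequence. By the fact quoted from \cite[Fact 4.3 and Proposition 4.6]{GlasnerMegrelishvili2012}, eventual fragmentation is equivalent to the absence of a sequence equivalent to the unit basis of $\ell_1$, so the task reduces to the equivalence ``$F$ contains an $\ell_1$-sequence $\iff$ $F$ contains an independent sequence''. I would first dispose of the implication from independence to $\ell_1$ by a direct norm estimate: if $(f_n)$ is independent with parameters $p<q$, then given scalars $a_1,\dots,a_N$ I set $\varepsilon(k)=1$ when $a_k\ge 0$ and $\varepsilon(k)=0$ otherwise and use independence to choose points $s\in\bigcap_{k}A_k^{\varepsilon(k)}$ and $s'\in\bigcap_{k}A_k^{1-\varepsilon(k)}$; a sign check term by term gives $\sum_k a_k\bigl(f_k(s)-f_k(s')\bigr)\ge (q-p)\sum_k|a_k|$, and since the left side is at most $2\norm{\sum_k a_k f_k}_\infty$ this yields $\norm{\sum_k a_k f_k}_\infty\ge\frac{q-p}{2}\sum_k|a_k|$, which together with the trivial upper bound shows $(f_n)$ is equivalent to the $\ell_1$-basis.

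For the reverse implication I would invoke Rosenthal's $\ell_1$-theorem in its pointwise form. A uniformly bounded sequence that converges pointwise on $K$ is weakly Cauchy, by dominated convergence together with the identification $\CC(K)^\ast=M(K)$ of the Riesz representation theorem; since no $\ell_1$-sequence has a weakly Cauchy subsequence, an $\ell_1$-sequence inside $F$ can have no pointwise convergent subsequence. Rosenthal's theorem then furnishes reals $p<q$ and a subsequence $(f_{n_k})$ whose associated pairs $\{f_{n_k}>q\}$, $\{f_{n_k}<p\}$ are independent, i.e. an independent sequence of functions. Assembling the pieces gives both directions of the theorem: if $K$ is WRN, Theorem \ref{GlasnerMegrelishvili} provides an eventually fragmented uniformly bounded separating family, whose rescaled evaluation map is an embedding whose coordinate pairs contain no independent sequence for any $p<q$; conversely, an embedding with that property yields coordinate functions forming an eventually fragmented separating family, so $K$ is WRN by Theorem \ref{GlasnerMegrelishvili}.

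The only genuinely nontrivial step is the passage from an $\ell_1$-sequence to an independent sequence, which rests on Rosenthal's theorem; everything else is the elementary estimate above and bookkeeping. Two routine points I would be careful about: that an affine rescaling applied uniformly to all members of $F$ multiplies every relevant diameter by a constant and hence preserves eventual fragmentation, so the passage into $[0,1]^\Gamma$ is harmless; and the quantifier order, since the definition of an independent sequence of functions already existentially quantifies $p<q$, so ``for every $p<q$ there is no independent sequence'' is the correct negation of ``$F$ contains an independent sequence''.
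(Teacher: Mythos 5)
Your proposal is correct and follows essentially the same route as the paper: both reduce the statement to Theorem \ref{GlasnerMegrelishvili} via the dictionary between rescaled separating families and embeddings into $[0,1]^\Gamma$, and both use the quoted equivalence of eventual fragmentation with the absence of $\ell_1$-sequences together with Rosenthal's results linking $\ell_1$-sequences and independent sequences. The only difference is that you write out the norm estimate and the pointwise form of Rosenthal's theorem explicitly, where the paper simply cites \cite[Proposition 4]{zbMATH03465962}.
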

\begin{proof}

If $K$ is WRN, then there exists an eventually fragmented uniformly bounded family $F \subset \CC(K)$ separating the points of $K$. Let $\| F \| := \sup_{f \in F} \|f \|$ and consider $\Gamma := \frac{F+\|F\|}{2\|F\|}$. We know that $\Gamma$ is an eventually fragmented family of continuous functions in $[0,1]^K$ separating the points of $K$. The function $i: K \rto [0,1]^\Gamma$ given by the formula $i(x)=\( \alpha(x) \)_{\alpha \in \Gamma}$ is an homeomorphic embedding. Since $\Gamma$ does not contain a sequence equivalent to the unit basis of $\ell_1$, it follows from \cite[Proposition 4]{zbMATH03465962} that for every real numbers $p<q$ the family of pairs $\(A_\alpha^0, A_\alpha^ 1\)_{\alpha \in \Gamma}$ does not contain independent sequences, where $A_\alpha^0 = \lbrace x \in K: x_\alpha = \alpha(i(x)) < p \rbrace$ and $A_\alpha^1 = \lbrace x \in K: x_\alpha =\alpha(i(x)) > q \rbrace$ for every $\alpha \in \Gamma$.

On the other hand, suppose $K$  is a subset of $ [0,1]^\Gamma$ and for every real numbers $p<q$ the family of pairs $\(A_\alpha^0, A_\alpha^ 1\)_{\alpha \in \Gamma}$ does not contain independent sequences, where $A_\alpha^0 = \lbrace x \in K: x_\alpha < p \rbrace$ and $A_\alpha^1 = \lbrace x \in K: x_\alpha > q \rbrace$ for every $\alpha \in \Gamma$. Then $\Gamma \subset \CC(K)$ is a family without sequences equivalent to the unit basis of $\ell_1$ and therefore it is an eventually fragmented uniformly bounded family separating the points of $K$. Thus, $K$ is WRN due to Theorem \ref{GlasnerMegrelishvili}.
\end{proof}

\begin{DEFI}
\label{QWRN}
A compact space $K$ is \textbf{quasi WRN} (QWRN for short) if there exists an homeomorphic embedding of $K$ in $[0,1]^\Gamma$ such that for every $\varepsilon >0$ there exists a decomposition $\Gamma=\bigcup_{n \in \N} \Gamma_n^\varepsilon $ such that for every $p<q$ with $q-p> \varepsilon $, the family of pairs $\(A_\alpha^0, A_\alpha ^1\)_{\alpha \in \Gamma_n^\varepsilon}$ does not contain independent sequences for every $n \in \N$, where $A_\alpha^0 = \lbrace x \in K: x_\alpha < p \rbrace$ and $A_\alpha^1 = \lbrace x \in K: x_\alpha > q \rbrace$ for every $\alpha \in \Gamma$.
\end{DEFI}

In \cite{zbMATH01709966} and \cite{zbMATH01450217} two superclasses of continuous images of RN compacta are defined.
In \cite{zbMATH02120157} it is proved that both superclasses are equal. Compact spaces of these superclasses are called QRN.
In \cite{zbMATH01709966} it is proved that zero-dimensional QRN compact spaces are RN and that continuous images of QRN compact spaces are QRN.  
In essence, our definition of QWRN is analogous to the definition given in \cite{zbMATH01450217}. In this section, we prove similar results for QWRN compact spaces.

From Theorem \ref{WRNindependent} it follows that every WRN compact space is QWRN.
A useful characterization of QWRN compact spaces is given by the following lemma:

\begin{LEMM}
\label{QWRN2}
A compact space $K$ is QWRN if and only if there exists an homeomorphic embedding of $K$ in $[0,1]^\Gamma$ satisfying that for every $p<q$ there exists a countable decomposition $\Gamma=\bigcup_{n \in \N} \Gamma_n^{p,q} $ such that the family of pairs $\( A_\alpha^0, A_\alpha^1 \)_{\alpha \in \Gamma_n^{p,q}}$ does not contain independent sequences for every $n\in \N$, where $A_\alpha ^0 = \lbrace x \in K: x_\alpha < p \rbrace$ and  $A_\alpha^1 = \lbrace x \in K: x_\alpha > q \rbrace$ for every $\alpha \in \Gamma$.
\end{LEMM}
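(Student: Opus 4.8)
The plan is to prove both implications, noting that the forward one is essentially immediate while all the work is in the converse. Throughout I fix an embedding $K\subseteq[0,1]^\Gamma$ and, for reals $p<q$, I write $A_\alpha^0(p)=\{x\in K: x_\alpha<p\}$ and $A_\alpha^1(q)=\{x\in K: x_\alpha>q\}$ to make the dependence on the thresholds explicit. For the \emph{necessity} of the condition, suppose $K$ is QWRN and take the embedding supplied by Definition \ref{QWRN}. Given any $p<q$, I would set $\varepsilon=(q-p)/2>0$ and use the decomposition $\Gamma=\bigcup_{n\in\N}\Gamma_n^\varepsilon$ furnished by the definition. Since $q-p=2\varepsilon>\varepsilon$, that decomposition already witnesses that $(A_\alpha^0(p),A_\alpha^1(q))_{\alpha\in\Gamma_n^\varepsilon}$ contains no independent sequence for each $n$, so $\Gamma_n^{p,q}:=\Gamma_n^\varepsilon$ does the job.

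The converse is the substantive direction. The first ingredient I would record is a \textbf{monotonicity observation}: if $p'\le p$ and $q'\ge q$, then $A_\alpha^0(p)\subseteq A_\alpha^0(p')$ and $A_\alpha^1(q)\subseteq A_\alpha^1(q')$ for every $\alpha$, so for any index set $I\subseteq\Gamma$, the absence of an independent sequence in $(A_\alpha^0(p'),A_\alpha^1(q'))_{\alpha\in I}$ forces the absence of one in $(A_\alpha^0(p),A_\alpha^1(q))_{\alpha\in I}$. The reason is that independence only requires certain finite intersections $\bigcap_k A_{\alpha_k}^{\sigma(k)}$ to be nonempty, and shrinking every set can only shrink those intersections, never enlarge them; so an independent sequence for the wider pair $(p,q)$ would immediately be one for the narrower pair $(p',q')$. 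The same principle shows that replacing $I$ by a subset $I'\subseteq I$ can never create an independent sequence where the larger family had none.

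Now fix $\varepsilon>0$; I must manufacture one countable decomposition of $\Gamma$ valid for \emph{all} pairs with $q-p>\varepsilon$ at once. The key idea, and the point that keeps the decomposition countable, is to reduce the continuum of relevant pairs to finitely many ``core'' pairs. I would choose a finite grid $0=t_0<t_1<\dots<t_M=1$ of mesh strictly below $\varepsilon/2$, and for each of the finitely many pairs $t_i<t_j$ apply the hypothesis to obtain a decomposition $\Gamma=\bigcup_{n\in\N}\Gamma_n^{t_i,t_j}$ with no independent sequences in any piece. I then take the common refinement: to each coordinate $\alpha$ attach the finite tuple $\tau$ recording, for every core pair, the index of the piece containing $\alpha$, and set $\Gamma_\tau$ to be the corresponding intersection. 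Since a finite product of copies of $\N$ is countable, this yields a genuinely countable family $\{\Gamma_\tau\}_\tau$ partitioning $\Gamma$, and this is exactly the step that would collapse if infinitely many core pairs were used.

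It remains to verify this decomposition works. Given $p<q$ with $q-p>\varepsilon$, if $p\le0$ or $q\ge1$ then some $A_\alpha^0(p)$ or $A_\alpha^1(q)$ is empty for all $\alpha$ and no independent sequence exists over any subfamily; otherwise $0<p<q<1$ and the mesh condition guarantees grid points with $p\le t_i<t_j\le q$, so $[t_i,t_j]\subseteq[p,q]$. Each $\Gamma_\tau$ sits inside a single piece $\Gamma_n^{t_i,t_j}$, which carries no independent sequence for $(t_i,t_j)$; by the monotonicity observation it carries none for the wider pair $(p,q)$, and by the subfamily remark neither does the smaller set $\Gamma_\tau$. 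Hence $(A_\alpha^0(p),A_\alpha^1(q))_{\alpha\in\Gamma_\tau}$ has no independent sequence for every $\tau$, which is precisely what QWRN demands. The genuine obstacle, which I expect to be the crux, is the tension between controlling uncountably many pairs $(p,q)$ of gap exceeding $\varepsilon$ and retaining a countable decomposition; the monotonicity observation, letting one core pair dominate an entire cone of wider pairs, combined with the finiteness of a sufficiently fine grid, is what dissolves it.
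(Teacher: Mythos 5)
Your argument follows essentially the same route as the paper's proof: the forward implication by taking $\varepsilon=(q-p)/2$ and reusing $\Gamma_n^{\varepsilon}$, and the converse by fixing a finite grid of mesh below $\varepsilon/2$, applying the hypothesis to the finitely many grid pairs, and passing to the countable common refinement, with monotonicity letting one grid pair dominate every pair of gap exceeding $\varepsilon$ (the paper uses only consecutive grid pairs, but that difference is immaterial). One slip to fix: your ``monotonicity observation'' has its inequalities reversed. With $p'\le p$ one gets $A_\alpha^0(p')\subseteq A_\alpha^0(p)$, not the stated inclusion, and absence of an independent sequence for the smaller sets does not force absence for the larger ones. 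What you need, and what you in fact use correctly in the final verification, is the opposite direction: for $p\le t_i<t_j\le q$ the sets $A_\alpha^0(p)$ and $A_\alpha^1(q)$ are contained in $A_\alpha^0(t_i)$ and $A_\alpha^1(t_j)$ respectively, so an independent sequence for $(p,q)$ would already be one for $(t_i,t_j)$. Since the application and its justification (enlarging sets preserves independence) are the correct ones, the proof stands once the statement of the observation is repaired.
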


\begin{proof}
Set $\varepsilon=q-p$. If $K$ is QWRN, then we can take $\Gamma_n^{p,q} = \Gamma_n^{\frac{\varepsilon}{2}}$ for every $n \in \N$.

Now, fix $\varepsilon>0$. There exist $p_1 < p_2 < ... < p_m$ such that for every $p<q $ with $q-p>\varepsilon $, there exist $p<p_j < p_{j+1} < q$ for some $j \leq m$. Thus, we can obtain a countable decomposition of $$ \Gamma = \bigcup_{(n_1,...,n_{m-1}) \in \N^{m-1}} \bigcap_{j=1}^{m-1} \Gamma_{n_j} ^{p_j,p_{j+1}},$$
with each $ \bigcap_{j=1}^{m-1} \Gamma_{n_j} ^{p_j,p_{j+1}}$ verifying that for every $p<q$ with $q-p> \varepsilon $, the family of pairs $\(A_\alpha^0, A_\alpha ^1\)_{\alpha \in \bigcap_{j=1}^{m-1} \Gamma_{n_j} ^{p_j,p_{j+1}}}$ does not contain independent sequences, where $A_\alpha^0 = \lbrace x \in K: x_\alpha < p \rbrace$ and $A_\alpha^1 = \lbrace x \in K: x_\alpha > q \rbrace$ for every $\alpha \in \Gamma$.
\end{proof}

The following lemma is a modification of Lemma 3 in \cite{zbMATH03465962}.

\begin{LEMM}[{\cite[Lemma~9.5]{2014arXiv1405.2588G}}]
\label{IndSeq}
Let $\(A_n^0, A_n^1\)_{n \in \N}$ be an independent sequence of disjoint pairs of subsets of a set $S$. Suppose there exist $N \in \N$ and $N$ sequences of disjoint pairs $(A_{n,j}^0, A_{n,j}^1)_{n \in \N}$ with $j=1,2,...,N$ such that 
$$ A_n^0 \times A_n^1 \subset \bigcup_{j=1}^N A_{n,j}^0 \times A_{n,j}^1  \mbox{ for every } n \in \N .$$ 

Then, there is $j_0 \in \lbrace 1,2,...,N \rbrace$ and a subsequence $\(n_k \)_{k\in \N}$ of $\N$ such that $\(A_{n_k ,j_0}^0, A_{n_k ,j_0}^1\)_{k \in \N}$ is an independent sequence.

\end{LEMM}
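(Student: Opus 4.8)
The plan is to work entirely with the finite-intersection (``independence'') formulation and to turn the product-covering hypothesis into a pointwise device. First I would reduce to the case $A_{n,j}^0\subseteq A_n^0$ and $A_{n,j}^1\subseteq A_n^1$ by replacing each $A_{n,j}^\sigma$ with $A_{n,j}^\sigma\cap A_n^\sigma$; this preserves disjointness of the pairs and the covering $A_n^0\times A_n^1\subseteq\bigcup_{j}A_{n,j}^0\times A_{n,j}^1$. The key observation is then the following \emph{color-witness fact}: for a finite $E\subseteq\N$ and a sign pattern $\varepsilon:E\to\{0,1\}$, independence of $(A_n^0,A_n^1)$ makes both $U_\varepsilon:=\bigcap_{n\in E}A_n^{\varepsilon(n)}$ and $U_{1-\varepsilon}:=\bigcap_{n\in E}A_n^{1-\varepsilon(n)}$ nonempty; choosing $x\in U_\varepsilon$ and $y\in U_{1-\varepsilon}$, for each $n\in E$ the ordered pair formed by $x,y$ lies in $A_n^0\times A_n^1$, so the covering yields a color $j_n$ with $x\in A_{n,j_n}^{\varepsilon(n)}$ and $y\in A_{n,j_n}^{1-\varepsilon(n)}$. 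In particular, if the map $n\mapsto j_n$ is constant equal to some $j_0$, then $x\in\bigcap_{n\in E}A_{n,j_0}^{\varepsilon(n)}$, so color $j_0$ realizes the pattern $(E,\varepsilon)$.

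With this device the goal is reformulated as follows: it suffices to find a color $j_0$ and an infinite set $M\subseteq\N$ such that $j_0$ is \emph{full} on $M$, meaning
\[
\bigcap_{n\in E}A_{n,j_0}^{\varepsilon(n)}\neq\emptyset\quad\text{for every finite }E\subseteq M\text{ and every }\varepsilon:E\to\{0,1\},
\]
since fullness of $j_0$ on $M$ is precisely independence of $(A_{n,j_0}^0,A_{n,j_0}^1)_{n\in M}$. I would obtain such $j_0$ and $M$ by an iterated application of the infinite Ramsey theorem together with a fusion (diagonal) construction: for each fixed arity $r$ one colors the $r$-element subsets of $\N$ by the finitely many possible ``forbidden-pattern'' data for the $N$ colors, extracts a homogeneous infinite set, and then diagonalizes over $r$ to produce a single infinite set on which the behaviour of all $N$ colors has stabilized; the color-witness fact is what feeds concrete emptiness/nonemptiness information into these colorings.

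The main obstacle, and the whole content of the lemma, is pinning down a \emph{single} color $j_0$ that works \emph{simultaneously for all patterns} along $M$: the witnessing color $j_n$ in the pointwise fact depends on $E$, on $\varepsilon$, and on the chosen $x,y$, and for a fixed finite configuration no single color need realize all patterns at once. I would handle this by contradiction: assuming no color is full on any subsequence, for each $j$ the failure ``every infinite set contains a finite $E$ with some empty $(E,\varepsilon)$ for $j$'' can be compressed, via Ramsey over the finitely many patterns of each bounded size, into a uniform obstruction of bounded arity that persists along a common subsequence for all $N$ colors; applying the color-witness fact to a configuration drawn from that subsequence then forces one of the colors to realize a pattern it was supposed to forbid, the desired contradiction. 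I would also note that a naive induction on $N$ (trying to discard color $N$ on a subsequence) does not work, because the covering may genuinely require all $N$ colors on every subsequence even when color $N$ is itself never independent; this is exactly why the argument must treat all colors at once rather than peel them off one at a time.
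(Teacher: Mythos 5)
First, a point of reference: the paper does not prove this lemma at all --- it is quoted verbatim from \cite[Lemma~9.5]{2014arXiv1405.2588G} (itself a variant of Lemma~3 in \cite{zbMATH03465962}), so there is no in-paper proof to compare against. Judged on its own, your proposal gets the preliminary reductions right: intersecting $A_{n,j}^\sigma$ with $A_n^\sigma$ is harmless, the ``color-witness fact'' is correct, and reformulating the goal as finding a color that is full on an infinite set is exactly the right target. The Ramsey-plus-diagonalization step (stabilizing, for each color $j$ and each arity $r$, which patterns $\varepsilon\in\{0,1\}^r$ are realizable along the subsequence) is also a legitimate and standard first move.

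The gap is in the final step, which is precisely the part you yourself identify as ``the whole content of the lemma.'' You claim that, once each color $j$ has been assigned a uniform forbidden pattern $\varepsilon_j$ of bounded arity, ``applying the color-witness fact to a configuration drawn from that subsequence forces one of the colors to realize a pattern it was supposed to forbid.'' This does not follow. The color-witness fact only produces a color $j_n$ \emph{separately for each index} $n\in E$; the most it yields for a single color is, by pigeonhole, some $j_0$ with $x\in\bigcap_{n\in I}A_{n,j_0}^{\varepsilon(n)}$ for a set $I$ containing at least $|E|/N$ of the positions, and the restriction $\varepsilon|_I$ need not contain $\varepsilon_{j_0}$ as a subpattern. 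Concretely, take $N=2$, suppose color $1$ forbids the pattern $(1,1)$ and color $2$ forbids $(0,0)$, and test with an alternating $\varepsilon=(0,1,0,1,\dots)$: the witness assignment $j_n=1$ on the $0$-positions and $j_n=2$ on the $1$-positions is perfectly consistent with both prohibitions as far as the point $x$ is concerned, even though half the positions get each color. In this example a contradiction can still be extracted, but only by also tracking the second witness $y\in A_{n,j_n}^{1-\varepsilon(n)}$ (two $1$-positions colored $2$ put $y$ in $A_{n,2}^0\cap A_{n',2}^0$, violating the prohibition $(0,0)$), which your argument never does after stating the fact. So the mechanism you describe is insufficient as written; the actual proofs in \cite{zbMATH03465962} and \cite{2014arXiv1405.2588G} have to exploit both witnesses simultaneously and choose the test patterns in terms of the forbidden ones, and that is the combinatorial content still missing from your proposal.
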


\begin{THEO}
\label{continuousimageqwrn}

The continuous image of a QWRN compact space is QWRN.

\end{THEO}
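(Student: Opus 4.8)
The plan is to prove that QWRN is preserved under continuous surjections by pulling the target's embedding data back through the map, using Lemma~\ref{QWRN2} as the working characterization throughout. Suppose $\phi : K \rto L$ is a continuous surjection with $K$ QWRN. Fix an embedding $K \hookrightarrow [0,1]^\Gamma$ witnessing the QWRN property via Lemma~\ref{QWRN2}, so that for each $p<q$ we have a countable decomposition $\Gamma = \bigcup_n \Gamma_n^{p,q}$ with no independent sequences inside each $\Gamma_n^{p,q}$. We also fix an arbitrary embedding $L \hookrightarrow [0,1]^\Lambda$. We must produce an embedding of $L$ and a decomposition of its index set with the same non-independence property.

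First I would form the natural combined embedding of $L$. Since $\phi$ is a continuous surjection between compacta, each coordinate functional $\lambda \in \Lambda$ on $L$ pulls back to a continuous function $\lambda \circ \phi \in \CC(K)$, but this alone does not separate points of $K$, so the idea is instead to embed $L$ using the coordinates of $K$ together with $\Lambda$ in a way compatible with $\phi$. Concretely, I would consider the index set $\Delta := \Lambda \sqcup \Gamma$ and define a map $j : L \rto [0,1]^\Delta$ by choosing, for each $y \in L$, a preimage and transporting the $\Gamma$-coordinates; the cleaner route is to push the structure of $K$ forward. For each $\gamma \in \Gamma$ and each $y \in L$ the relevant quantities are the sup and inf of the $\gamma$-th coordinate over the fiber $\phi^{-1}(y)$. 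This suggests that the honest construction is to embed $L$ via $\Lambda$ (to get a genuine embedding) and then, for the non-independence analysis, compare pairs on $L$ with pairs on $K$ transported through $\phi$, which is where Lemma~\ref{IndSeq} enters.

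The heart of the argument is the following. Fix $p<q$ and, choosing auxiliary thresholds $p<p'<q'<q$, I would relate a pair of subsets $(B_\lambda^0, B_\lambda^1)$ of $L$ (defined by the $\lambda$-coordinate crossing $p$ and $q$) to finitely many pairs $(A_\gamma^0, A_\gamma^1)$ of $K$ transported by $\phi^{-1}$. Using the $\Gamma$-coordinates of $K$ one covers each product $B_\lambda^0 \times B_\lambda^1$ (pulled back to $K \times K$) by a uniformly bounded number $N$ of products $A_{\gamma,j}^0 \times A_{\gamma,j}^1$ coming from the $\Gamma$-structure, so that the hypothesis of Lemma~\ref{IndSeq} is met. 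Then if some piece $\Gamma_n^{p',q'}$-indexed family were to carry an independent sequence of the $B$-pairs, Lemma~\ref{IndSeq} would extract an independent subsequence living inside a single $\Gamma_n^{p',q'}$, contradicting the QWRN property of $K$. Running this over the countably many $n$ and combining with a further countable refinement produces the desired countable decomposition $\Lambda = \bigcup_n \Lambda_n^{p,q}$ with no independent $B$-sequences in any piece.

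The main obstacle, and the step I expect to be delicate, is the transport geometry: a single coordinate $\lambda$ on $L$ does not correspond to a single coordinate $\gamma$ on $K$, so separating the fibers of $\phi$ finely enough to cover $B_\lambda^0 \times B_\lambda^1$ by \emph{finitely many} $\Gamma$-product pairs requires a compactness argument. Because $\phi^{-1}(B_\lambda^1)$ and $\phi^{-1}(B_\lambda^0)$ are disjoint closed sets whose product avoids the diagonal, and $\Gamma$ separates the points of $K$, one covers the compact product $\phi^{-1}(B_\lambda^0) \times \phi^{-1}(B_\lambda^1)$ by basic open boxes of the form $\{x_\gamma < r\} \times \{x_\gamma > s\}$; extracting a \emph{uniform} bound $N$ independent of $\lambda$ is what makes Lemma~\ref{IndSeq} applicable and is the technical crux. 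Once uniformity is secured, the rest is an assembly of countable decompositions, and the QWRN property of $L$ follows from Lemma~\ref{QWRN2}.
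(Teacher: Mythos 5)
Your overall architecture matches the paper's: pull the pairs on the image back through the surjection, cover them by compactness with finitely many coordinate-defined open sets of the QWRN space, apply Lemma \ref{IndSeq} to transfer an independent sequence, and absorb the discrete parameters into a countable decomposition of the image's index set. However, the "uniform bound $N$ independent of $\lambda$" that you single out as the technical crux is actually the easy part: one simply puts into a single piece of the decomposition all $\lambda$ whose covering uses at most $N$ boxes, at the cost of one more countable splitting. (Also, your first paragraph's detour through a combined index set $\Lambda \sqcup \Gamma$ is a dead end that you rightly abandon; an arbitrary embedding of the image suffices, as the paper's Lemma \ref{independence} makes explicit.)

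The genuine gap is one step later. After Lemma \ref{IndSeq} you hold an independent sequence of pairs $\(\lbrace x : x_{\gamma_k} < r_k \rbrace, \lbrace x : x_{\gamma_k} > s_k \rbrace\)_{k}$ whose thresholds $r_k < s_k$ come from the compactness covering and have nothing to do with your auxiliary $p' < q'$ (those cut a coordinate of the \emph{image}, not of the QWRN space). To contradict the QWRN hypothesis you must find one fixed pair $p<q$ with $r_k \le p < q \le s_k$ along a subsequence whose coordinates $\gamma_k$ all lie in a single piece of the corresponding decomposition; if $s_k - r_k \to 0$ no such $p<q$ exists, and even when the gaps are bounded below, the information of which pieces of $\Gamma$ the covering coordinates fall into must have been encoded into the decomposition of $\Lambda$ beforehand. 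This is precisely where the paper works hardest: it records a separation index $m_\alpha$ forcing gaps of at least $1/m_\alpha$, invokes the $\varepsilon$-form of Definition \ref{QWRN} (which controls all $p<q$ with $q-p>\varepsilon$ simultaneously) rather than Lemma \ref{QWRN2}, decomposes by $m$ and by which $\Lambda_k^{1/2m}$ the covering coordinates belong to, and finally lets $r_k \to r$ to manufacture the fixed thresholds. Your sketch, resting on Lemma \ref{QWRN2} alone and on pieces "$\Gamma_n^{p',q'}$" that are not the relevant ones, does not close this step. It could be closed, arguably more cleanly than in the paper, by insisting that all covering boxes use rational thresholds $r<s$ and decomposing $\Lambda$ according to the finite list of triples $(n,r,s)$ with $\gamma \in \Gamma_n^{r,s}$ that occur; but some such device is indispensable and is absent from your proposal.
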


\begin{proof}

Let $f:L \rto K$ be a continuous surjective function with $K \subset [0,1]^\Gamma$, $L \subset [0,1]^ \Lambda$ and $\Lambda$ verifying the conditions of Definition \ref{QWRN}. We are going to prove that $\Gamma$ verifies the conditions of Lemma \ref{QWRN2}.

Fix $p<p'<q'<q$ and $A_\alpha^0 = \lbrace x \in K: x_\alpha < p \rbrace$, $A_\alpha^1 = \lbrace x \in K: x_\alpha > q \rbrace$ for every $\alpha \in \Gamma$.
A basis for the topology of $L$ is given by the open sets $$U_{(\beta, r, s)}= \lbrace y \in L: r_i < y_{\beta_i} < s_i \mbox{ for each } i=1,...,n \rbrace $$ with $\beta=(\beta_1, ...,\beta_n) \in \Lambda^n$, $r=(r_1,...,r_n),s=(s_1,...,s_n) \in [-1,2]^n$ and $n\in \N$.
Therefore, $$f^{-1}(A_\alpha^0)\subset f^{-1}(\lbrace x \in K: x_\alpha \leq p \rbrace) \subset f^{-1}( \lbrace x \in K: x_\alpha < p' \rbrace )=: \bigcup_{(\beta, r, s)\in S_\alpha '} U_{(\beta, r,s)}$$ for some set $S_\alpha'$. Due to the compactness of $f^{-1}(\lbrace x \in K: x_\alpha \leq p \rbrace)$, there exists a finite set $S_\alpha \subset S_\alpha '$ such that 
$f^{-1}(A_\alpha^0)\subset  \bigcup_{(\beta, r, s)\in S_\alpha} U_{(\beta, r,s)}$. 
Similarly, there exists a finite set $S_\alpha^1$ such that 
$$ f^{-1}(A_\alpha^1)\subset  \bigcup_{(\beta, r, s)\in S_\alpha^1} U_{(\beta, r,s)} \subset f^{-1}(\lbrace x \in K: x_\alpha > q' \rbrace .$$

Without loss of generality, we can take for every $\alpha \in \Gamma $ a natural number $n_\alpha $ such that $|S_\alpha | = |S_\alpha ^1 | = n_\alpha$. Set
$$ \lbrace U_{(\beta, r,s)} : (\beta, r, s)\in S_\alpha \rbrace =: \lbrace U_i^{\alpha,0} : i=1,2,...,n_\alpha \rbrace $$ and $$ \lbrace U_{(\beta, r,s)} : (\beta, r, s)\in S_\alpha^1 \rbrace =: \lbrace U_i^{\alpha,1} : i=1,2,...,n_\alpha \rbrace .$$

For every $U_{(\beta, r, s)}$ and $m \in \N$, we define
$$ m(U_{(\beta, r,s)}) := \left\{ y \in L:  y_{\beta_i} < r_i-\frac{1}{m} \mbox{ or } y_{\beta_i} > s_i+\frac{1}{m} \mbox{ for some } i \right\} .$$
Notice that $\overline{ U_i^{\alpha, 0} } \cap \overline{ U_j^{\alpha, 1} }= \emptyset $ for every $i,j =1,2,...,n_\alpha$. Therefore, for each $\alpha \in \Gamma$, we can fix $m_\alpha \in \N$ such that 
$$ U_j^{\alpha, 1} \subset m_\alpha (U_i^{\alpha, 0}) \mbox{ for every } i,j=1,2,...,n_\alpha .$$
For every $\varepsilon > 0$, we have a decomposition $\Lambda = \bigcup_n \Lambda_n ^ \varepsilon$ with each $\Lambda_n^ \varepsilon$ verifying the conditions of Definition \ref{QWRN}.
For every $n,m,N \in \N$, define $ \Gamma_{n,m,N} \subset \Gamma$ the set of all points $\alpha \in \Gamma$ with $n_\alpha=n$, $m_\alpha = m$, $|\beta| \leq N$ and $\beta_i \in \bigcup_{k=1}^N \Lambda_k^{\frac{1}{2m}}$ for every $(\beta, r, s) \in S_\alpha$.

We claim that $\(A_\alpha^0, A_\alpha^1\)_{\alpha \in \Gamma_{n, m, N}}$ has an independent sequence. If not, the family $\(f^{-1}\(A_\alpha^0\), f^{-1}\(A_\alpha^1\)\)_{\alpha \in \Gamma_{n, m, N}}$ contains an independent sequence too.
Since $$f^{-1}(A_\alpha^0) \times f^{-1}(A_\alpha^1) \subset \bigcup_{i,j=1,...,n} U_i^{\alpha,0} \times U_j^{\alpha,1} ,$$ there exist $i,j \in \lbrace 1,2,...,n \rbrace$ such that the family $\(U_i^{\alpha,0}, U_j^{\alpha,1}\)_{\alpha \in \Gamma_{n,m, N}}$ contains an independent sequence, due to Lemma \ref{IndSeq}.
Therefore, $\(U_i^{\alpha,0}, m\(U_i^{\alpha,0}\)\)_{\alpha \in \Gamma_{n,m, N}}$ contains an independent sequence.
By definition,
$$ m(U_i^{\alpha,0}) = \bigcup_{t=1,...,k} \left\{ y \in L: y_{\beta_t^\alpha} < r_t^\alpha-\frac{1}{m} \right\} \cup \left\{ y \in L: y_{\beta_t^\alpha} > s_t^\alpha + \frac{1}{m} \right\} ,$$
where $U_i^{\alpha, 0} = U_{(\beta,r,s)}$ and $\beta=(\beta_1^\alpha,...,\beta_k^\alpha)$, $r=(r_1^\alpha,...,r_k^\alpha)$ and $s=(s_1^\alpha, ..., s_k^\alpha)$.
Without loss of generality, we suppose that 
$$\(U_i^{\alpha,0}, \left\lbrace y \in L: y_{\beta_t^\alpha} <r_t^\alpha-\frac{1}{m} \right\rbrace  \)_{\alpha \in \Gamma_{n,m, N}}$$ contains an independent sequence. Since $U_i^{\alpha, 0} \subset \left\{ y \in L: y_{\beta_t^\alpha} >r_t^\alpha \right\} $,
the family of pairs $\(\left\{ y \in L: y_{\beta_t^\alpha} >r_t^\alpha \right\} , \left\{ y \in L: y_{\beta_t^\alpha} <r_t^\alpha-\frac{1}{m} \right\} \)_{\alpha \in \Gamma_{n,m, N}}$ contains an independent sequence.
Therefore, there exists an independent sequence of the form $\(\left\{ y \in L: y_{\beta_k} >r_k \right\}, \left\{ y \in L: y_{\beta_k} <r_k-\frac{1}{m} \right\} \)_{k\in \N}$ with $\beta_k \in \Lambda_1^ {\frac{1}{2m}} \cup \Lambda_2^ {\frac{1}{2m}} \cup... \cup \Lambda_N^ {\frac{1}{2m}}$ for every $k \in \N$.
Taking a subsequence if necessary, we can suppose that there exist $t \in \lbrace 1,2,...,N \rbrace$ such that $\beta_k \in \Lambda_t^ {\frac{1}{2m}}$ and that $r_k$ tends to $r \in [-1,2]$.
Therefore, $$\(\left\{ y \in L: y_{\beta} >r-\frac{1}{8m} \right\} , \left\{ y \in L: y_{\beta} <r-\frac{3}{4m} \right\} \)_{\beta \in \Lambda_t^ {\frac{1}{2m}}}$$ contains an independent sequence.
This is a contradiction with the definition of $\Lambda_t^{\frac{1}{2m}}$ because $ r-\frac{1}{8m}-(r-\frac{3}{4m} )= \frac{5}{8m} > \frac{1}{2m} $.

Thus, $\Gamma= \bigcup_{n,m,N \in \N } \Gamma_{n,m,N}$ and $\(A_\alpha^0,A_\alpha^1\)_{\alpha \in \Gamma_{n,m,N}}$ does not contain independent sequences for any $n,m,N \in \N$.  
\end{proof}

Notice that in the previous proof we have not made any assumption on the family $\Gamma$. Therefore, in particular, we have proved the following result: 
\begin{LEMM}
\label{independence}

A compact space $K$ is QWRN if and only if \textbf{for every} homeomorphic embedding of $K$ in $[0,1]^\Gamma$  and for every $\varepsilon >0$ there exists a countable decomposition $\Gamma=\bigcup_{n \in \N} \Gamma_n^\varepsilon $ such that for every $p<q$ with $q-p> \varepsilon $, the family of pairs $\(A_\alpha^0, A_\alpha ^1\)_{\alpha \in \Gamma_n^\varepsilon}$ does not contain independent sequences for every $n \in \N$, where $A_\alpha^0 = \lbrace x \in K: x_\alpha < p \rbrace$ and $A_\alpha^1 = \lbrace x \in K: x_\alpha > q \rbrace$ for every $\alpha \in \Gamma$.

\end{LEMM}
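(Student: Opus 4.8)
The plan is to obtain this essentially for free from the proof of Theorem \ref{continuousimageqwrn}, exploiting the fact (already flagged just before the statement) that that argument never uses any property of the target family $\Gamma$.

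The backward implication is immediate. Every compact space admits \emph{some} embedding into a cube $[0,1]^\Gamma$, and for such an embedding the hypothesis is exactly the condition of Definition \ref{QWRN}; hence $K$ is QWRN.

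For the forward implication I would fix an \emph{arbitrary} embedding $K \subset [0,1]^\Gamma$ and show it meets the stated condition. Since $K$ is QWRN there is \emph{some} embedding $K \subset [0,1]^\Lambda$ for which $\Lambda$ satisfies Definition \ref{QWRN}. The key move is to view the identity on $K$ as a continuous surjection $\mathrm{id}: (K,\Lambda) \rto (K,\Gamma)$ from the QWRN-witnessing copy of $K$ onto the arbitrarily embedded copy. Applying the argument of Theorem \ref{continuousimageqwrn} to this surjection — which, as noted, imposes no hypothesis on the codomain family $\Gamma$ — yields that $\Gamma$ satisfies the conditions of Lemma \ref{QWRN2}: for every $p<q$ there is a countable decomposition $\Gamma = \bigcup_n \Gamma_n^{p,q}$ in which no piece carries an independent sequence.

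It then remains to pass from the $(p,q)$-indexed decomposition of Lemma \ref{QWRN2} to the single $\varepsilon$-indexed decomposition demanded here, which is precisely the content of the second half of the proof of Lemma \ref{QWRN2} carried out for the fixed embedding $\Gamma$: given $\varepsilon>0$ one picks finitely many reals $p_1 < \dots < p_m$ so that every interval $(p,q)$ of length exceeding $\varepsilon$ contains a consecutive pair $p_j < p_{j+1}$, and forms the countable common refinement with pieces $\bigcap_{j} \Gamma_{n_j}^{p_j,p_{j+1}}$, each of which avoids independent sequences for all such $(p,q)$. This delivers the required $\Gamma = \bigcup_n \Gamma_n^\varepsilon$. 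I expect the only delicate point — really a bookkeeping verification rather than a genuine obstacle — to be confirming that the proof of Theorem \ref{continuousimageqwrn} uses only the QWRN property of the source $\Lambda$ together with the continuity and surjectivity of the map, so that it applies verbatim to the identity map with an arbitrary target $\Gamma$; the remark preceding the statement records exactly this.
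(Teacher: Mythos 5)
Your proposal is correct and is exactly the paper's intended argument: the author derives Lemma \ref{independence} by remarking that the proof of Theorem \ref{continuousimageqwrn} places no hypothesis on the target family $\Gamma$, which amounts to your application of that proof to the identity map from a QWRN-witnessing embedding onto an arbitrary one, followed by the conversion between the $(p,q)$-indexed and $\varepsilon$-indexed decompositions via (the second half of the proof of) Lemma \ref{QWRN2}. You actually spell out the two steps (identity-map surjection, then the decomposition conversion for the fixed embedding) more explicitly than the paper does.
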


As in the case of quasi RN and RN compact spaces, every zero-dimensional QWRN compact space is WRN.

\begin{THEO}

\label{zerodimensionalQWRN}

Let $K$ be a zero-dimensional QWRN compact space. Then $K$ is WRN.

\end{THEO}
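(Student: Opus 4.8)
The plan is to exploit zero-dimensionality to embed $K$ into a Cantor cube $\lbrace 0,1 \rbrace^\Gamma$, and then rescale the coordinates so that the countably many ``good pieces'' supplied by the QWRN property acquire amplitudes tending to zero; for a fixed gap $p<q$ this forces any independent sequence to live in only finitely many pieces, which can then be collapsed to a single piece.

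First I would use that a zero-dimensional compact Hausdorff space has a basis of clopen sets to fix a homeomorphic embedding $i : K \rto \lbrace 0,1 \rbrace^\Gamma \subset [0,1]^\Gamma$, the coordinates being indicators of clopen sets. For such an embedding every nondegenerate pair with $0<p'<q'<1$ is exactly the clopen partition $\(\lbrace x : x_\alpha=0 \rbrace, \lbrace x : x_\alpha=1 \rbrace\)$, independently of the particular thresholds. Applying Lemma \ref{independence} with $\varepsilon = \tfrac12$ to this specific embedding yields a countable decomposition $\Gamma = \bigcup_n \Gamma_n$ such that, for $p'<q'$ with $q'-p'>\tfrac12$ (say $p'=\tfrac18$, $q'=\tfrac78$), the family $\(\lbrace x_\alpha=0 \rbrace, \lbrace x_\alpha=1 \rbrace\)_{\alpha \in \Gamma_n}$ contains no independent sequence, for every $n \in \N$.

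Then I would define the rescaled map $j : K \rto [0,1]^\Gamma$ by $j(x)_\alpha = i(x)_\alpha / n$ whenever $\alpha \in \Gamma_n$, so that the $\alpha$-coordinate takes values in $\lbrace 0, \tfrac1n \rbrace$. Being a continuous injection from a compact space into a Hausdorff space, $j$ is a homeomorphic embedding. Fix $p<q$ and set $A_\alpha^0 = \lbrace x : j(x)_\alpha < p \rbrace$, $A_\alpha^1 = \lbrace x : j(x)_\alpha > q \rbrace$. A short case check shows that unless $0<p<q<1$ some member of every pair is empty, so no independent sequence can exist. In the remaining case, for $\alpha \in \Gamma_n$ with $\tfrac1n \leq q$ one has $A_\alpha^1 = \emptyset$, while for the finitely many $n$ with $\tfrac1n > q$ (that is, $n < \tfrac1q$) one obtains precisely $A_\alpha^0 = \lbrace x_\alpha=0 \rbrace$ and $A_\alpha^1 = \lbrace x_\alpha=1 \rbrace$, since there $p<q<\tfrac1n$. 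The essential point is that the degenerate possibility $A_\alpha^0 = K$ occurs only when $p>\tfrac1n$, which forces $q>\tfrac1n$ and hence $A_\alpha^1=\emptyset$; such coordinates therefore never enter an independent sequence.

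Consequently any independent sequence for the embedding $j$ must use coordinates drawn only from the finitely many pieces $\Gamma_1, \dots, \Gamma_{\lfloor 1/q \rfloor}$. By the pigeonhole principle infinitely many of its coordinates lie in a single $\Gamma_{n_0}$, and since a subsequence of an independent sequence is again independent, this produces an independent sequence of the clopen partition pairs $\(\lbrace x_\alpha=0 \rbrace, \lbrace x_\alpha=1 \rbrace\)_{\alpha \in \Gamma_{n_0}}$, contradicting the conclusion obtained above from Lemma \ref{independence}. Hence, for every $p<q$, the family $\(A_\alpha^0, A_\alpha^1\)_{\alpha \in \Gamma}$ contains no independent sequence, so $K$ is WRN by Theorem \ref{WRNindependent}. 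I expect the main obstacle to be the careful bookkeeping of the degenerate cases under rescaling, in particular confirming that the surviving nondegenerate pairs are exactly the original clopen partitions and that $A_\alpha^0 = K$ always coincides with $A_\alpha^1 = \emptyset$, rather than the final pigeonhole-and-collapse step.
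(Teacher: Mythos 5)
Your proposal is correct and follows essentially the same route as the paper: embed $K$ in $\lbrace 0,1\rbrace^\Gamma$, invoke Lemma \ref{independence} with $\varepsilon=\tfrac12$ (noting that for $0/1$-valued coordinates the pairs are independent of the thresholds), rescale the coordinates in $\Gamma_n$ by $\tfrac1n$, and conclude via Theorem \ref{WRNindependent}. The only difference is that you spell out the degenerate-case and pigeonhole bookkeeping that the paper leaves implicit in the sentence asserting that the rescaled family contains no independent sequence of functions.
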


\begin{proof}

Since $K$ is zero-dimensional, we can suppose that  $K \subset \lbrace 0, 1 \rbrace ^\Gamma$ for some set $\Gamma$.
Due to Lemma \ref{independence}, there exists a decomposition $\Gamma = \bigcup_{n \in \N} \Gamma_n $ such that for every $p<q$ with $q-p> \frac{1}{2} $, the family of pairs $A_\alpha^0 = \lbrace x\in K: x_\alpha < p \rbrace$, $A_\alpha^1 = \lbrace x \in K: x_\alpha > q \rbrace$ with $\alpha \in \Gamma_n$ does not contain independent sequences. Since $K \subset \lbrace 0,1 \rbrace ^\Gamma$, we know that the previous family of pairs does not contain independent sequences for any $p<q$.

Let $F=\lbrace f_\alpha \rbrace_{\alpha \in \Gamma} \subset \CC(K)$, where $f_\alpha(x)=\frac{x_\alpha}{n}$ for every $n\in \N$, $\alpha \in \Gamma_n$ and $x\in K$.
The family $F$ separates the points of $K$ and it does not contain an independent sequence of functions, so $K$ is WRN due to Theorem \ref{WRNindependent}.
\end{proof}

As a corollary of Theorems \ref{continuousimageqwrn} and \ref{zerodimensionalQWRN}, we obtain the following result:

\begin{CORO}

Zero-dimensional continuous images of WRN compact spaces are also WRN.

\end{CORO}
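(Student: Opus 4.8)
The plan is to obtain the corollary by simply chaining Theorems \ref{continuousimageqwrn} and \ref{zerodimensionalQWRN}, together with the observation (recorded in the text preceding Lemma \ref{QWRN2}) that every WRN compact space is QWRN. So I would start by fixing a WRN compact space $L$, a zero-dimensional compact space $K$, and a continuous surjection $f : L \rto K$, and aim to show that $K$ is WRN.

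The first step is to upgrade the hypothesis on $L$ from WRN to QWRN. Here I would invoke Theorem \ref{WRNindependent}: since $L$ is WRN, it embeds in some $[0,1]^\Lambda$ so that for \emph{every} $p<q$ the family $\(A_\beta^0, A_\beta^1\)_{\beta \in \Lambda}$ contains no independent sequence. To match Definition \ref{QWRN}, I would then observe that for each $\varepsilon > 0$ one may take the trivial countable decomposition $\Lambda = \bigcup_{n} \Lambda_n^\varepsilon$ with $\Lambda_1^\varepsilon = \Lambda$ and $\Lambda_n^\varepsilon = \emptyset$ for $n \geq 2$; since the whole family already omits independent sequences for all $p<q$, each piece trivially does as well. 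Hence $L$ is QWRN.

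The second step applies Theorem \ref{continuousimageqwrn} directly: as $K$ is the continuous image under $f$ of the QWRN compact space $L$, the theorem gives that $K$ is QWRN. The final step uses the zero-dimensionality hypothesis on $K$ together with Theorem \ref{zerodimensionalQWRN}, which asserts precisely that a zero-dimensional QWRN compact space is WRN; applying it to $K$ concludes that $K$ is WRN.

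I do not expect any genuine obstacle here, since the entire difficulty has already been absorbed into the two cited theorems. The only point requiring a modicum of care is the bookkeeping in the first step, namely confirming that the uniform non-existence of independent sequences supplied by Theorem \ref{WRNindependent} really does meet the $\varepsilon$-dependent decomposition demanded by Definition \ref{QWRN}; this is immediate via the trivial one-piece decomposition, so the corollary follows at once.
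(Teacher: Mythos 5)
Your proposal is correct and follows exactly the paper's intended argument: the corollary is stated as an immediate consequence of Theorems \ref{continuousimageqwrn} and \ref{zerodimensionalQWRN}, combined with the observation (made right after Theorem \ref{WRNindependent}) that every WRN compact space is QWRN. Your explicit verification of the trivial one-piece decomposition in the first step is a reasonable spelling-out of what the paper leaves implicit.
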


We finish the section relating Eberlein and Corson compacta with QWRN and WRN compacta.
Let $\Sigma( \Gamma )$ denote the sigma-product consisting of all $x \in \R^\Gamma$ such that $x$ has countable support. A compact space is said to be Corson if it is homeomorphic to a compact subspace of $\Sigma(\Gamma)$ for some set $\Gamma$ and it is said to be Eberlein if it is homeomorphic to a weak-compact subset of a Banach space.

Farmaki gave the following characterization of those Corson compacta that are Eberlein:

\begin{THEO}[\cite{Farmaki1987}]
A compact space $K \subset \Sigma(\Gamma)$ is Eberlein if and only if for every $\varepsilon >0$ there exist a countable decomposition $\Gamma= \bigcup_{n \in \N} \Gamma_n^\varepsilon $ such that for every $x \in K$ and every $n \in \N$, the set $\lbrace \alpha \in \Gamma_n^\varepsilon: |x_\alpha|>\varepsilon \rbrace$ is finite.

\end{THEO}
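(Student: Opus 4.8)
The plan is to translate Farmaki's condition into a statement about point-finite families and then to use the Amir--Lindenstrauss description of Eberlein compacta as the product-compact subsets of $c_0(\Gamma')$. First I would record the reformulation: for a fixed $\varepsilon>0$, writing $U_\alpha^\varepsilon=\lbrace x\in K:|x_\alpha|>\varepsilon\rbrace$, the existence of a decomposition $\Gamma=\bigcup_n\Gamma_n^\varepsilon$ with $\lbrace\alpha\in\Gamma_n^\varepsilon:|x_\alpha|>\varepsilon\rbrace$ finite for all $x,n$ says exactly that each subfamily $\lbrace U_\alpha^\varepsilon\rbrace_{\alpha\in\Gamma_n^\varepsilon}$ is point-finite, i.e. that $\lbrace U_\alpha^\varepsilon\rbrace_{\alpha\in\Gamma}$ is $\sigma$-point-finite. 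The two standard facts I would use are: (i) a bounded subset of $c_0(\Gamma')$ that is compact in the product topology is weakly compact (pointwise convergence of a bounded net is weak convergence, since $c_0(\Gamma')^\ast=\ell^1(\Gamma')$ and one applies dominated convergence, together with Eberlein--\v{S}mulian), so such sets are Eberlein; and (ii) conversely every Eberlein compactum is homeomorphic to such a set.

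For the ``if'' direction I would build the required embedding explicitly. Given the decompositions $\Gamma=\bigcup_n\Gamma_n^{1/k}$ for each $k\in\N$, put $\Gamma''=\Gamma\times\N$ and define $\phi:K\rto\R^{\Gamma''}$ by $\phi(x)_{(\alpha,k)}=w_{n,k}\,\eta_k(x_\alpha)$, where $n=n(\alpha,k)$ is the unique index with $\alpha\in\Gamma_n^{1/k}$, the weights satisfy $w_{n,k}\in(0,1]$ and $\lbrace(n,k):w_{n,k}>\delta\rbrace$ finite for all $\delta>0$ (e.g. $w_{n,k}=2^{-n-k}$), and $\eta_k:\R\rto(-1,1)$ is continuous, odd, vanishes exactly on $[-1/k,1/k]$, and is strictly increasing on $[1/k,\infty)$. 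Continuity of $\phi$ is clear since each coordinate depends continuously on a single $x_\alpha$. Injectivity follows because if $x_\alpha\neq y_\alpha$ one chooses $k$ with $1/k$ below the relevant modulus, and the strict monotonicity of $\eta_k$ off $[-1/k,1/k]$ separates the values. Finally, $|\phi(x)_{(\alpha,k)}|>\delta$ forces $w_{n,k}>\delta$, hence finitely many pairs $(n,k)$, and for each such pair it forces $|x_\alpha|>1/k$ with $\alpha\in\Gamma_n^{1/k}$, hence finitely many $\alpha$ by hypothesis; so $\phi(x)\in c_0(\Gamma'')$ and $\phi(K)$ is a bounded product-compact subset of $c_0(\Gamma'')$. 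By fact (i) this is Eberlein, and $\phi$ is a homeomorphism onto its image.

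The substantial direction is ``only if.'' Here I would fix, by fact (ii), an embedding $j:K\rto B_{c_0(\Gamma')}$ onto a product-compact set, so that the families $\lbrace\gamma\in\Gamma':|y_\gamma|>\delta\rbrace$ are point-finite for every $\delta>0$ \emph{without} any decomposition, where $y_\gamma$ are the coordinate functionals of $j$. The goal is to reproduce $\sigma$-point-finiteness for the given coordinate family $\lbrace U_\alpha^\varepsilon\rbrace$. Since the $y_\gamma$ together with the constants separate points, Stone--Weierstrass gives, for each $\alpha$, a polynomial $P_\alpha$ in finitely many of the $y_\gamma$ with $\norm{\pi_\alpha-P_\alpha}_\infty<\varepsilon/3$, where $\pi_\alpha(x)=x_\alpha$; thus $U_\alpha^\varepsilon\subset\lbrace|P_\alpha|>2\varepsilon/3\rbrace$. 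The strategy is then to decompose $\Gamma$ according to the \emph{combinatorial type} of this approximation (the number of variables and the rational data of $P_\alpha$), which is a countable parameter set, and on each type to force point-finiteness: when $x\in U_\alpha^\varepsilon$ the polynomial exceeds $2\varepsilon/3$, so for types with $|P_\alpha(0)|\leq 2\varepsilon/3$ there is a uniform $\delta_P>0$ with at least one used coordinate $\gamma$ satisfying $|y_\gamma(x)|>\delta_P$, whence one invokes the point-finiteness of $\lbrace|y_\gamma|>\delta_P\rbrace$ at $x$.

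The main obstacle lies precisely in this last step, and it is genuine. Because $K\subset\Sigma(\Gamma)$, the family $\lbrace U_\alpha^\varepsilon\rbrace$ is automatically point-\emph{countable} (each $x$ has countable support); this is shared by all Corson compacta and is therefore not enough, so the whole force of the Eberlein hypothesis must be spent upgrading point-countability to $\sigma$-point-finiteness. The difficulty is that $\Gamma'$ may be uncountable, so one cannot index the decomposition by \emph{which} coordinates of $\Gamma'$ are used without destroying countability; moreover knowing that each admissible $\alpha$ activates \emph{some} coordinate in the finite set $\lbrace\gamma:|y_\gamma(x)|>\delta_P\rbrace$ only bounds the coordinates, not the number of $\alpha$'s sharing them. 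Thus the delicate heart of the argument is the careful selection that turns these point-countable fibres into genuinely finite ones while keeping the number of pieces countable, extracting the bound solely from the point-finiteness of the $c_0(\Gamma')$-coordinate families rather than from any enumeration of $\Gamma'$; this is exactly where Farmaki's construction does its real work, and I would expect to have to refine the crude type-decomposition above to accomplish it.
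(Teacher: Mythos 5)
Your ``if'' direction is essentially complete and correct: the weighted embedding $\phi(x)_{(\alpha,k)}=w_{n,k}\,\eta_k(x_\alpha)$ into $c_0(\Gamma\times\N)$ is a valid way to convert the hypothesis into membership of the image in $c_0$, the injectivity and continuity checks go through, and the facts you invoke (a bounded, pointwise compact subset of $c_0(\Gamma'')$ is weakly compact; a continuous injection of a compactum is an embedding) are standard and correctly applied. Note that the paper itself offers no proof of this statement --- it is quoted from Farmaki --- so you are only being measured against what a complete argument requires.

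The ``only if'' direction, however, is not a proof, and you say so yourself. Concretely: after Stone--Weierstrass you know that each $x\in U_\alpha^\varepsilon$ forces $|y_\gamma(x)|>\delta_P$ for at least one of the finitely many coordinates $\gamma$ used by $P_\alpha$, and the set $F_x=\lbrace\gamma : |y_\gamma(x)|>\delta_P\rbrace$ is finite; but the assignment $\alpha\mapsto\gamma$ can be infinite-to-one even after fixing the combinatorial type of $P_\alpha$, so finiteness of $F_x$ does not bound the number of $\alpha$'s in a given type class with $x\in U_\alpha^\varepsilon$. The $c_0$-point-finiteness controls the $\gamma$'s, not the $\alpha$'s, and point-countability of $\lbrace U_\alpha^\varepsilon\rbrace$ (which you correctly observe is automatic for any subset of $\Sigma(\Gamma)$, Eberlein or not) cannot be upgraded to $\sigma$-point-finiteness by a type decomposition alone. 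This is precisely the substantive half of Farmaki's theorem; a complete argument must extract additional combinatorial information from the Eberlein hypothesis (in Farmaki's paper, and in the related $\sigma$-point-finite characterizations of Rosenthal and Benyamini--Rudin--Wage, this is done by a genuine selection/refinement argument, not by approximation). As it stands, your submission establishes one implication and gives only a plan, with an acknowledged hole, for the other.
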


The Talagrand's compact is an example of a Corson compact space which is not Eberlein \cite{Talagrand1979}. It consists of all characteristic functions $1_A \in \lbrace 0,1 \rbrace^{\N ^\N}$ with $A \subset \N^\N$ such that there exists $n\in \N$ with 
$x(k)=y(k)$ for every $k=1,2,...,n$  and  $x(n+1) \neq y(n+1) \mbox{ for every } x,y \in A \mbox{ with }x \neq y .$

However, every Corson and RN compact space is Eberlein:

\begin{THEO}[\cite{zbMATH00515838, orihuela1991every}]

A compact $K$ is Eberlein if and only if it is Corson and RN.
\end{THEO}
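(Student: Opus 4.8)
The plan is to prove the two implications separately, since the ``only if'' direction is a fairly standard consequence of the Davis--Figiel--Johnson--Pe\l{}czy\'{n}ski (DFJP) factorization, while the ``if'' direction is the substantial one.

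For the forward implication, suppose $K$ is Eberlein, realized as a weakly compact subset of a Banach space $X$. That $K$ is Corson is classical: by the Amir--Lindenstrauss theorem $K$ embeds homeomorphically as a weakly compact subset of some $c_0(\Gamma)$, and $c_0(\Gamma) \subset \Sigma(\Gamma)$ because every element of $c_0(\Gamma)$ has countable support. To see that $K$ is RN, I would first replace $K$ by the closed absolutely convex hull $W = \overline{\co}(K \cup -K)$, which is weakly compact by the Krein--\v{S}mulian theorem, and then apply the DFJP construction to obtain a reflexive space $Z$ together with a continuous linear injection $i: Z \rto X$ such that $W \subset i(B_Z)$. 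Restricting $i$ to the weakly compact ball $B_Z$ gives a continuous injection onto its image, hence a homeomorphism for the respective weak topologies; therefore $\tilde{K} := (i|_{B_Z})^{-1}(K)$ is weakly compact in $Z$ and homeomorphic to $K$. Since $Z$ is reflexive, $Z = (Z^*)^*$ is a dual space with the Radon--Nikod\'{y}m property and its weak topology coincides with the weak$^\ast$ topology, so $\tilde K \cong K$ exhibits $K$ as a weak$^\ast$-compact subset of a dual space with RNP, i.e. $K$ is RN.

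For the reverse implication, assume $K$ is Corson and RN, and fix an embedding $K \subset \Sigma(\Gamma) \cap [-1,1]^\Gamma$. My strategy is to verify the criterion of Farmaki quoted just above: for each $\varepsilon > 0$ I must produce a countable decomposition $\Gamma = \bigcup_n \Gamma_n^\varepsilon$ so that $\lbrace \alpha \in \Gamma_n^\varepsilon : |x_\alpha| > \varepsilon \rbrace$ is finite for every $x \in K$ and every $n$. The tool provided by the RN hypothesis is the standard characterization of RN compacta (Namioka) as those for which $(K,\tau)$ is fragmented by a lower semicontinuous metric $d$, where $\tau$ denotes the pointwise topology. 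Observe that Corson alone is insufficient: each $x$ has countable support, yet $\lbrace \alpha : |x_\alpha| > \varepsilon \rbrace$ may still be infinite (as in the Talagrand example), so the fragmenting metric is exactly what must force the required local finiteness.

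The heart of the argument, and the step I expect to be the main obstacle, is converting the fragmentation into this countable decomposition. I would fix $\varepsilon$ and run a transfinite ``fragmentation derivative'' on $K$: starting from $K$, repeatedly peel off relatively open pieces of small $d$-diameter supplied by fragmentability, producing a well-ordered exhaustion $K = \bigcup_\xi V_\xi$ with each $V_\xi$ of $d$-diameter below a threshold $\eta = \eta(\varepsilon)$. Classifying each coordinate $\alpha$ according to the behaviour of the continuous function $x \mapsto x_\alpha$ along this exhaustion, and using that on a $d$-small piece the oscillation of the relevant coordinates is controlled, one organizes $\Gamma$ into countably many subfamilies on each of which the sets $\lbrace \alpha : |x_\alpha| > \varepsilon \rbrace$ are finite. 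Ensuring that the resulting decomposition is genuinely countable is precisely where the interplay between fragmentability and the countable-support (Corson) structure must be exploited, and is the delicate content carried out in \cite{orihuela1991every}. Once Farmaki's condition is verified for every $\varepsilon > 0$, his theorem yields that $K$ is Eberlein, completing the equivalence.
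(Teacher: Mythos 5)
The paper does not prove this statement at all: it is quoted as a known theorem of Benyamini--Starbird and Orihuela--Schachermayer--Valdivia, cited as \cite{zbMATH00515838, orihuela1991every} and used as a black box. So the comparison can only be against the literature. Your forward implication is correct and standard: Amir--Lindenstrauss gives the embedding of an Eberlein compact into $c_0(\Gamma)\subset\Sigma(\Gamma)$, and the Davis--Figiel--Johnson--Pe\l{}czy\'{n}ski factorization through a reflexive space, together with the fact that on the ball of a reflexive space the weak and weak$^\ast$ topologies coincide and reflexive spaces have the Radon--Nikod\'{y}m property, shows that $K$ is RN. No objection there.

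The reverse implication, however, is not a proof but a statement of intent, and the missing step is precisely the theorem. You correctly identify the right tools (Namioka's characterization of RN compacta as those fragmented by a lower semicontinuous metric, and Farmaki's criterion as the target), but the passage from ``$K$ admits a transfinite exhaustion by relatively open pieces of small $d$-diameter'' to ``$\Gamma$ splits into countably many pieces $\Gamma_n^\varepsilon$ on each of which $\lbrace \alpha : |x_\alpha|>\varepsilon\rbrace$ is finite for every $x$'' is exactly the hard content, and you explicitly defer it to \cite{orihuela1991every} (``the delicate content carried out in...''). As stated, the plan does not work on its face: the fragmentation derivative may have uncountably many stages, $\Gamma$ may be uncountable, and ``classifying each coordinate according to its behaviour along the exhaustion'' gives no a priori reason to end up with only countably many classes; the Talagrand compact (which is Corson, solid, and decomposes coordinatewise in many ways, yet is not Eberlein) shows that some genuine use of the lower semicontinuous fragmenting metric interacting with the countable supports is indispensable, and that interaction is nowhere carried out. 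A proof would have to supply this argument (for instance, via the internal characterization of RN compacta and an inductive construction of the decomposition, as in the cited paper); citing the reference for it means the substantial direction of the equivalence remains unproved.
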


It is proved in \cite{zbMATH01709966} that the previous theorem can be extended to QRN compact spaces. 
If the compact space is solid, then the previous theorem can also be extended to QWRN compact spaces:

\begin{THEO}

Let $K \subset \Sigma(\Gamma)$ be a solid Corson compact space, where solid means that for every $x\in K$ and every $A \subset \Gamma$ finite, $x1_A \in K$.
Then, $K$ is WRN if and only if it is QWRN if and only if it is Eberlein.
\end{THEO}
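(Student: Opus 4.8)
The plan is to prove the three equivalences by closing the cycle $\text{WRN} \Rightarrow \text{QWRN} \Rightarrow \text{Eberlein} \Rightarrow \text{WRN}$, where all the genuine work lies in the middle implication, which is the only place the solidity hypothesis is used. The two outer implications come for free: the remark following Theorem \ref{WRNindependent} already gives that every WRN compact space is QWRN, and the quoted characterization ``$K$ is Eberlein if and only if it is Corson and RN'' shows in particular that every Eberlein compact is RN, hence WRN, since WRN generalizes RN. Thus it remains to prove that a solid Corson compact space $K \subset \Sigma(\Gamma)$ which is QWRN is Eberlein, and for this I would verify Farmaki's finiteness condition.

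For the setup I would first observe that Eberlein, Corson and solidity are all preserved under coordinatewise rescaling, so after dividing each coordinate by $\max(1,\max_{x\in K}|x_\alpha|)$ I may assume $K \subset [-1,1]^\Gamma \cap \Sigma(\Gamma)$, still solid, and I apply Farmaki's theorem in this embedding. To bring in the QWRN hypothesis I would pass to the homeomorphic embedding $\psi\colon K \rto [0,1]^\Gamma$ given by $\psi(x)_\alpha = (x_\alpha+1)/2$, which sends the value $0$ to $1/2$. By Lemma \ref{independence}, QWRN of $K$ yields, for any prescribed parameter, a countable decomposition $\Gamma = \bigcup_{n} \Gamma_n$ whose associated pairs contain no independent sequence once the gap $q-p$ exceeds that parameter. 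Fixing the Farmaki level $\varepsilon > 0$, I would apply this with parameter $\varepsilon/16$ and claim that the very same decomposition witnesses Farmaki's condition at level $\varepsilon$, i.e. $\lbrace \alpha \in \Gamma_n : |x_\alpha| > \varepsilon \rbrace$ is finite for every $x\in K$ and every $n$.

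The heart of the argument is a contradiction powered by solidity. Suppose the claim fails, so that for some $x \in K$ and some $n$ the set $\lbrace \alpha \in \Gamma_n : |x_\alpha| > \varepsilon \rbrace$ is infinite; passing to an infinite subset I may assume a fixed sign, say $x_{\alpha_k} > \varepsilon$ for distinct $\alpha_1, \alpha_2, \dots \in \Gamma_n$ (the negative case being symmetric after swapping the roles of the two halves of each pair). Then $\psi(x)_{\alpha_k} > 1/2 + \varepsilon/2$, while any point of $K$ vanishing at a coordinate has $\psi$-value exactly $1/2$ there. Setting $p = 1/2 + \varepsilon/8$ and $q = 1/2 + \varepsilon/4$, so that $q - p = \varepsilon/8 > \varepsilon/16$, I would show the pairs are independent: for an arbitrary finite pattern $\varepsilon\colon \lbrace 1,\dots,m\rbrace \rto \lbrace 0,1\rbrace$, let $A = \lbrace \alpha_k : k \le m,\ \varepsilon(k)=1 \rbrace$ and invoke solidity to place $x 1_A \in K$. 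At coordinates $\alpha_k$ with $\varepsilon(k)=1$ this point has $\psi$-value $> 1/2 + \varepsilon/2 > q$, and at those with $\varepsilon(k)=0$ it has $\psi$-value $1/2 < p$, so $\psi(x1_A) \in \bigcap_{k \le m} A_{\alpha_k}^{\varepsilon(k)}$. Hence $(A_{\alpha_k}^0, A_{\alpha_k}^1)_k$ is an independent sequence inside $\Gamma_n$ with gap $> \varepsilon/16$, contradicting the choice of the decomposition; therefore Farmaki's condition holds and $K$ is Eberlein.

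The main obstacle, and really the whole point, is recognizing that solidity is exactly the mechanism that realizes every finite $0/1$ sign pattern as an honest point of $K$ through the truncations $x1_A$, thereby upgrading a single ``bad'' point possessing infinitely many large coordinates into a bona fide independent sequence. Everything else is parameter bookkeeping: one must only track the coordinatewise and uniform rescalings together with the two sign cases carefully enough to keep the produced gap $\varepsilon/8$ safely above the QWRN threshold $\varepsilon/16$, which is what guarantees the contradiction actually triggers Lemma \ref{independence}.
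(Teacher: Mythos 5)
Your proposal is correct and follows essentially the same route as the paper: reduce to showing QWRN implies Eberlein via Farmaki's criterion, and use solidity to turn an infinite set of coordinates where $|x_\alpha|>\varepsilon$ into an independent sequence of pairs inside a single piece $\Gamma_n$ of the QWRN decomposition, contradicting its defining property. If anything, you are more scrupulous than the paper, which works directly with the pairs $\lbrace x_\gamma > \varepsilon \rbrace$, $\lbrace x_\gamma < \varepsilon/2 \rbrace$ without spelling out the rescaling into $[0,1]^\Gamma$ or the reduction to a fixed sign; your explicit handling of both is sound and the gap bookkeeping ($\varepsilon/8 > \varepsilon/16$) checks out.
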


\begin{proof}

Since every Eberlein compact space is WRN and every WRN compact space is QWRN, we have to show that if $K$ is QWRN then it is Eberlein.
Suppose $K$ is QWRN and fix $\varepsilon >0$. There exists a decomposition $\Gamma=\bigcup_{n \in \N} \Gamma_n $ such that the family $\(A_\gamma ^0, A_\gamma ^1\)_{\gamma \in \Gamma_n}$  does not contain independent sequences for every $n \in \N$, where $A_\gamma ^0= \lbrace x \in K: x_\gamma > \varepsilon \rbrace $ and 
$A_\gamma ^1 = \lbrace x \in K: x_\gamma < \frac{\varepsilon}{2} \rbrace$.
Let $x \in K$. We are going to see that $\lbrace \gamma \in \Gamma_N: |x_\gamma|>\varepsilon \rbrace$ is finite for every $N \in \N$. Suppose $\( \gamma_n \)_{n \in \N} \subset \Gamma_N$ is a sequence of coordinates with $|x_{\gamma_n}|> \varepsilon$. Since $K$ is solid, for every $\delta : \lbrace 1,2,...,n \rbrace \rto \lbrace 0,1 \rbrace $, the element $x1_{\lbrace \gamma_k: \delta(k)=0 \rbrace }$ is in $K$ and, therefore $(A_{\gamma_n}^0, A_{\gamma_n}^1)$ is independent, since  $$x1_{\lbrace \gamma_k : \delta(k)=0 \rbrace } \in \bigcap_{k=1}^n A_{\gamma_k}^{\delta(k)}.$$
Thus, $\lbrace \alpha \in \Gamma_N : |x_\alpha|>\varepsilon \rbrace$ is finite for every $N \in \N$ and $K$ is Eberlein due to Farmaki's Theorem.
\end{proof}

\begin{CORO}

The Talagrand's compact is solid, Corson and not Eberlein. Therefore, the Talagrand's compact is not QWRN.

\end{CORO}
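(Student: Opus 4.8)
The plan is to verify the three asserted properties of the Talagrand compact $K$ and then to read off the conclusion from the preceding theorem, which says that a solid Corson compact space is QWRN if and only if it is Eberlein. Two of the three properties are already on record: $K$ is Corson (admissible sets are countable, since distinct elements of such a set have distinct $(n+1)$-th coordinates lying in $\N$, so $K \subset \Sigma(\N^\N)$) and $K$ is not Eberlein by Talagrand's theorem \cite{Talagrand1979}. Hence the only point requiring a genuine argument is solidity.

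First I would isolate the structural fact that the family of admissible subsets of $\N^\N$ is \emph{hereditary}. If $A$ is admissible with witness $n \in \N$, meaning every pair of distinct $x,y \in A$ satisfies $x(k)=y(k)$ for $k \le n$ and $x(n+1) \neq y(n+1)$, then this is a statement universally quantified over pairs, so it only becomes easier to satisfy when one passes to a subset; thus every $A' \subset A$ is admissible with the same witness $n$. In particular $\emptyset$ and all singletons are admissible. This is the whole engine of the argument.

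With heredity in hand, solidity is immediate: take an arbitrary $x = 1_B \in K$, so that $B \subset \N^\N$ is admissible, and an arbitrary finite $F \subset \N^\N$. Then $x 1_F = 1_{B \cap F}$, and $B \cap F \subset B$ is admissible by heredity, so $x 1_F = 1_{B \cap F} \in K$. This is exactly the solidity condition. Assembling the pieces, $K$ is a solid Corson compact space that is not Eberlein, so by the previous theorem it cannot be QWRN (were it QWRN it would be Eberlein), which is the desired conclusion.

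I do not expect a serious obstacle; the one subtlety worth checking carefully is that $K$, defined as the set of \emph{all} admissible indicator functions, is genuinely compact, i.e. closed in $\lbrace 0,1 \rbrace^{\N^\N}$, since solidity only makes sense once $K$ is known to be a compact space sitting inside $\Sigma(\N^\N)$. This reduces to showing that a pointwise limit of admissible sets is admissible: if $1_{A_i} \to 1_A$ pointwise and $x,y \in A$ are distinct, then both lie in $A_i$ for large $i$, which forces the witnessing integer of $A_i$ to equal the first coordinate of disagreement of $x$ and $y$ minus one; a short argument then shows this coordinate of disagreement is common to all pairs drawn from $A$, so $A$ is admissible with a single witness. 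Once compactness is secured, the reduction to the preceding theorem is routine.
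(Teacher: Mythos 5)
Your proposal is correct and follows exactly the route the paper intends: the corollary is an immediate consequence of the preceding theorem once one knows Talagrand's compact is solid, Corson and not Eberlein, and the paper leaves these verifications implicit. Your heredity argument for solidity (a subset of an admissible set is admissible with the same witness, so $x1_A = 1_{B\cap A}\in K$), together with the countability of admissible sets for the Corson property and the closedness check, supplies precisely the routine details the paper omits.
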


\section*{Acknowledgements}
I would like to thank Antonio Avil\'{e}s and Jos\'{e} Rodr\'{i}guez for their guidance and the invested time in this work.

\bibliography{WRN}
\bibliographystyle{plain}

\end{document}